\documentclass[11pt]{amsart}
\usepackage{pstricks,pstricks-add,pst-node} 
\begin{document}
\date{\today}
\title[Abelian permutation groups with graphical representations]{Abelian permutation groups with graphical representations}
\author{Mariusz Grech, Andrzej Kisielewicz}
\address{Institute of Mathematics, University of Wroclaw \\ 
pl.Grunwaldzki 2, 50-384 Wroclaw, Poland}
\email{[Mariusz.Grech,Andrzej.Kisielewicz]@math.uni.wroc.pl}
\thanks{Supported in part by Polish NCN grant 2016/21/B/ST1/03079}

\newtheorem{Theorem}{Theorem}[section]
\newtheorem{Lemma}[Theorem]{Lemma}
\newtheorem{Corollary}[Theorem]{Corollary}

\newcommand{\Aut} {\mbox{\rm Aut}}
\newcommand{\Orb} {\mbox{\rm Orb}}
\newcommand{\Cay} {\mbox{\rm Cay}}
\newcommand{\clo}[1]{\overline{#1}}
\newcommand{\<} {\langle}
\renewcommand{\>} {\rangle}


\begin{abstract}
In this paper we characterize permutation groups that are automorphism groups of coloured graphs and digraphs and are abelian as abstract groups. This is done in terms of basic permutation group properties. Using Schur's classical terminology, what we provide is characterizations of the classes of $2$-closed and $2^*$-closed abelian permutation groups. This is the first characterization concerning these classes since they were defined.
\end{abstract}

\keywords{coloured graph, automorphism group, permutation group, abelian group, $2$-closed}

\maketitle


\section{Introduction}



This paper is motivated by the K\"onig's problem for permutation groups. Recall that K\"onig's problem for groups asked which finite groups are the automorphism groups of (simple) graphs. This question, in its abstract version, was quickly answered by Frucht 
who showed that every abstract group is isomorphic to the automorphism group of some graph. The concrete version, of more combinatorial flavour, asks which finite permutation groups happen to be the automorphism groups of graphs. This version turns out to be much harder (see  \cite{bab2,god1}). In the abstract version we look for a graph with an arbitrary number of vertices whose automorphism group is \emph{isomorphic} to a given (abstract) group. In the concrete version we are given a permutation group $G=(G,X)$ acting on a set $X$ of elements and we are looking for a graph $(\Gamma,X)$ with the same set $X$ of vertices whose automorphisms \emph{are precisely} the permutations in $G$. 

It is important to understand fully the aims and motivation of this paper. There are related active areas of research concerning graphical representations of abstract groups, Cayley graphs and automorphism groups of circulant graphs  (see \cite{bab1,bab2,god,mor} and  \cite{Aru,DTW,DSV,HKMM,spi} to mention just a few most recent).
All they are closely connected with the problem considered in this paper. Yet, we approach the topic from a different point of view. Our main interest is in \emph{permutation groups} and classifying them by their natural representations as the automorphism groups of coloured graphs. This refers directly to the direction of research suggested in Wielandt's \cite{wie}. Many natural applications of permutation groups, especially in fields outside mathematics, concern the ways they act rather than their group structure (cf. \cite{KP,KPR,kis2}). While there is a lot of knowledge on transitive permutation groups 
(mainly due to research connected with the classification of finite simple groups), very little is known about intransitive groups and  their action on different orbits. Note that most graphs are not vertex transitive and the action of the automorphism group on different orbits may be very different and essential with regard to various graph properties.  

Our general aim is to contribute to the study of permutation groups on their own with various possible applications in mind. 





It is easy to see that some permutation groups, like the alternating groups $A_n$ on $n$ elements or the groups $C_n$ generated by the cyclic permutation $(1,2,\ldots,n)$, are not the groups of automorphisms of any graph (on $n$ elements). For a long time there was no progress in the concrete version of K\"onig's problem. 
Only research on the so called \emph{Graphical Regular Representation}  of groups (GRR) turned out a more systematic approach to the problem. The final result of this extensive study by Godsil \cite{god}, even if it concerned representations of abstract groups, may be interpreted as the description of those \emph{regular permutation groups} that are the automorphism groups of graphs. Analogous result for automorphism groups of directed graphs has been obtained in Babai~\cite{bab1}.  

The next natural class of permutation groups to study from this point of view is the class of cyclic permutation groups, that is, those generated by a single permutation. The K\"onig's problem for this class turned out not so easy as it could seem at the first sight. After some partial results containing errors and wrong proofs (\cite{MSS1,MSS2} corrected in \cite{gre}), the final result has been obtained only recently 
(announced in \cite{GK4}). The full description turns out to consist of seven technical conditions concerning possible lengths of the orbits. 

To obtain the result, we have applied the mentioned Wielandt's approach to start from considering the invariance groups of families of binary relations rather than the automorphism groups of simple graphs. In the language of graphs these are the automorphism groups of (edge) coloured graphs. This approach is more natural. For example, we have obtained an easy-to-formulate result that a cyclic permutation group $G$ is the automorphism group of a coloured graph if and only if for every nontrivial orbit $O$ of $G$ there exists another orbit $Q$ such that $\gcd(|O|, |Q|) \geq 3$. The proof yields also the result that if a cyclic permutation group is the automorphism group of a coloured graph, than it is the automorphism group of a coloured graph that uses at most $3$ colours. Only then, one may consider the question for which cyclic permutation groups $2$ colours suffice, which turns out a rather technical one. 

The next natural class to attack in the concrete version of K\"onig's problem is that of abelian permutation groups. 
The survey \cite{mor} reports a result by Zelikovskij \cite{zel} where the solution of K\"onig's problem for a large class of abelian permutation groups (namely, those whose order is not divisible by $2,3$ or $5$) is provided. There is no English translation of \cite{zel}, so the survey quotes only the English summary. The restriction means that the lengths of the orbits must be relatively prime to $30$, and its apparent aim is to avoid technical complications. The worse thing is that the result as stated is false. In \cite{GK5} we demonstrate a counterexample and point out the false algebraic assumption used by Zelikovskij in the proof. 

In this paper we make the first step to find a correct characterization of the abelian permutation groups that are the automorphism groups of graphs. 
As before, we start from characterizing those abelian permutation groups that are the automorphism groups of coloured graphs and digraphs. 
Again, it turns out that this can be done in a quite nice way in terms of basic properties of permutation groups.

Our characterizations use a technical notion specific for intransitive permutation groups.
For a given permutation group $G$ we say that a permutation $\sigma$ is \emph{$2$-orbit compatible} with $G$ if for every pair of orbits $O$ and $Q$ of $G$ there is a permutation $\sigma'\in G$ such that $\sigma$ and $\sigma'$ act in the same way on $O\cup Q$. The group $G$ is $2$-\emph{orbit-closed} if every permutation that is $2$-{orbit-compatible} with $G$ belongs to $G$. Every transitive permutation group is trivially $2$-orbit-closed, but permutation groups containing more than $2$ orbits may not be.

We prove that an abelian permutation group $A$ is the automorphism group of an edge-coloured directed graph 
if and only if $A$ is $2$-orbit-closed (Theorem~\ref{th:main2}). 
Furthermore, we prove that an abelian permutation group $A$ is the automorphism group of an edge-coloured (simple) graph
if and only if $A$ is $2$-orbit-closed and satisfies an additional condition concerning groups induced by $A$ on its orbits   (Theorem~\ref{th:main}). Our main tool is the subdirect sum decomposition of intransitive groups, which is recalled for convenience of the reader in Section~\ref{subd}. 

The results of this paper have been announced in \cite{GK5}.

\section{Terminology}

For standard notions and terminology of permutation groups see e.g. \cite{DM}. We use the notation $G=(G,X)$ to denote a permutation group $G$ acting on a finite set $X$. Permutation groups are considered up to permutation isomorphism, i.e., two groups that are permutation isomorphic (in the sense of \cite[p. 17]{DM}) are treated as identical. In particular we usually assume that $X=\{1,2,\ldots,n\}$, and by $S_n$ and $A_n$ we denote 
the full \emph{symmetric group} and the \emph{alternating group}, respectively, acting on $X$. By $C_n$ we denote the subgroup of $S_n$ generated by the cyclic permutation $(1,2,\ldots,n)$. By $I_n$ we denote the \emph{trivial} permutation group acting on $n$ elements, that is, the subgroup of $S_n$ containing the identity permutation only.  

A $k$-\emph{coloured digraph} $\Gamma = (X, \gamma)$ is a set $X$ (of vertices) with a function $\gamma : X\times X \to \{0,1,\ldots, k-1\}$. 
If $\gamma$ is a function from the unordered pairs $P_2(X)$ of the set $X$ to $\{0,1,\ldots, k-1\}$, 
then $\Gamma$ is called a $k$-\emph{coloured graph}. They may be viewed as the complete digraph or the complete graph, respectively, on a set $X$, whose edges are coloured with at most $k$ different colours. In the case $k=2$, they may be identified with simple digraphs and graphs.

The $i$-th \emph{degree} of a vertex $x\in X$, denoted $d_i(x)$, is the number of edges in colour $i$ incident with $x$. The $k$-tuple $(d_0(x),\ldots,d_{k-1}(x))$ is referred to as the \emph{$k$-tuple of colour degrees} of $x$. In coloured digraphs we may distinguish also outdegrees and indegrees, and the corresponding $k$-tuples. 

A permutation $\sigma$ of $X$ is an \emph{automorphism} of $\Gamma$, if it preserves the colours of edges in $\Gamma$. Obviously, each automorphism preserves also 
the $k$-tuples of colour degrees of vertices. The automorphisms of $\Gamma$ form a permutation group, which is denoted  $\Aut(\Gamma)$. We say also that $\Gamma$ \emph{represents} (graphically) the permutation group $\Aut(\Gamma)$. We note that not every permutation group is representable by a coloured graph or digraph. For example, the alternating group $A_n$ is not representable for any $n>3$ (neither by a coloured graph nor by a digraph). The cyclic group $C_n$ is not representable by a coloured graph (for any $n>2$), but it is representable by a ($2$-coloured) digraph. 

Given a permutation group $G=(G,X)$, by $\Orb(G,X) = \Orb(G)$ we denote the coloured digraph in which two edges have the same colour if and only if they belong to the same orbit of $G$ in its action on $X\times X$ (i.e. \emph{orbital}). Similarly, by $\Orb^*(G,X) = \Orb^*(G)$ we denote the coloured graph in which two edges have the same colour if and only if they belong to the same orbit of $G$ in its action on $P_2(X)$.

It is easy to see that 
$\Aut(\Orb^*(G)) \supseteq \Aut(\Orb(G))\supseteq G$ as groups of permutations over $X$. The first group is called the $2^*$-\emph{closure} of $G$, while the second group is called the $2$-\emph{closure} of $G$. When a permutation group happens to be equal to its $2^*$-closure or $2$-closure, then it is called $2^*$-closed or $2$-closed, respectively. These groups are the largest permutation groups with the given set of orbits on $P_2(X)$ or on $X\times X$, respectively (see \cite{bab2,wie}; according to Wielandt \cite{wie} the notion of 2-closure as a tool in the study of permutation groups was introduced by I. Schur).

If $G$ is not $2^*$-closed, then $G$ is not the automorphism group of any (coloured) graph. 
Otherwise, there may be various coloured graphs $\Gamma$ such that $G = \Aut(\Gamma)$. Yet, each such graph can be obtained from $\Orb^*(G)$ by identifying some colours. In particular, if $G = \Aut(\Gamma)$ for a simple graph $\Gamma$, then $\Gamma$ can be obtained from $\Orb^*(G)$ by identifying some colours with $1$ (corresponding to edges), and other colours with $0$ (non-edges).

We define $GR(k)$ to be the class of all permutation groups that are automorphism groups of coloured graphs using at most $k$ colours. The union $GR = \bigcup_{k\geq 1}GR(k)$ is just the class of $2^*$-closed permutation groups. Similarly, we define $DGR(k)$ as the class of all permutation groups that are automorphism groups of coloured digraphs using at most $k$ colours. The union $DGR = \bigcup_{k\geq 1}DGR(k)$ is just the class of $2$-closed permutation groups.


Similar terminology may be introduced starting from orbits of a permutation group $(G,X)$ on $m$-tuples rather than on ordered pairs. Then in place of the orbitals we consider $m$-orbits (meaning the orbits in the action of $G$ on the Cartesian power $X^m$). Analogously, we can define $m$-closure of permutation groups. Then, $m$-closed groups and $m^*$-closed groups can be viewed as the invariance groups of families of relations. This is the point of view represented in Wielandt's \cite{wie}. (Note that the terminology here is not well established, and for example, in \cite{SV} the definitions of the same notions are different).  

While this seems pretty natural topic in the area of permutation groups not much has been done so far in it. The reason is that, on the one hand, the topic turned out to be rather hard, and on the other hand, the main stream of research in permutation groups was focused so far on delivering tools for the classification of finite simple groups, and this restricted research to transitive groups. Meanwhile, this should be again emphasized, the concrete version of K\"onig's problem and Wielandt's approach \cite{wie} are strictly connected with the fact that the solution may depends on the number of orbits and the properties of action on them.

\subsection{Subdirect sum decomposition}\label{subd}

A natural tool in the study of intransitive permutation groups is the subdirect sum of permutation groups. Given two permutation groups $G\leq S_n$ and $H\leq S_m$, the \emph{direct sum} $G\oplus H$ is the permutation group on $\{1,2,\ldots, n+m\}$ defined as the set of permutations $\pi =(\sigma,\tau)$ such that
$$
\pi(i) =
\left\{\begin{array}{ll}
\sigma(i), & \mbox{if }  i\leq n\\
n+\tau(i-n), & \mbox{otherwise.}
\end{array}\right.
$$
Thus, in $G\oplus H$, permutations of $G$ and $H$ act independently in a natural way on a disjoint union of the base sets of the summands. 

We introduce the notion of the \emph{subdirect sum} following \cite{GK} (and the notion of \emph{intransitive product} in \cite{kis}). 
Let $H_1\lhd\; G_1\leq S_n$ and $H_2\lhd\; G_2 \leq S_m$ be permutation groups such that $H_1$ and $H_2$ are normal subgroups of $G_1$ and $G_2$, respectively. Suppose, in addition, that factor groups $G_1/H_1$ and $G_2/H_2$ are (abstractly) isomorphic and $\phi :  G_1/H_1 \to G_2/H_2$ is the isomorphism mapping. Then, by
$$
G = G_1[H_1] \oplus_\phi G_2[H_2]
$$
we denote the subgroup of $G_1 \oplus G_2$ consisting of all permutations $(\sigma,\tau)$, $\sigma\in G_1, \tau\in G_2$ such that $\phi(\sigma H_1) = \tau H_2$. 
Each such group will be called a \emph{subdirect sum} of $G_1$ and $G_2$, and denoted briefly  $G_1 \oplus_\phi G_2$ (in this notation the normal subgroups $H_1$ and $H_2$ are assumed to be specified in the full description of the isomorphism $\phi$). 

If $H_1=G_1$, then $G_1/H_1$ is a trivial (one-element) group, and consequently, $G_2/H_2$ must be trivial, which means that we have also $H_2 = G_2$. Then, $G = G_1 \oplus G_2$ is the usual direct sum, with $\phi$ being the mapping between one-element sets. In such special case the subdirect sum $\oplus_\phi$ will be referred to as \emph{trivial}. 
In the case when $G_1 = G_2 = G$ and $H_1=H_2$ is the trivial one-element subgroup of $G$, and $\phi: G \to G$ is the identity mapping, the subdirect sum is nontrivial. In this case we call the resulting sum a \emph{parallel sum} (permutation isomorphic groups $G_1$ and $G_2$ act in a {parallel} manner on their orbits), and denote it briefly, by $2^\approx G$.
For example, the cyclic group generated by the permutation 
$\sigma = (1,2,3)(4,5,6)$ is permutation isomorphic to  $2^\approx C_3$. 

The main fact established in \cite{kis} is that every intransitive group has the form of a subdirect sum, and its components can be easily described. Let $G$ be an intransitive group acting on a set $X = X_1 \cup X_2$ in such a way that $X_1$ and $X_2$ are disjoint fixed blocks of $G$. Let $G_1$ and $G_2$ be restrictions of $G$ to the sets $X_1$ and $X_2$, respectively (they are called also \emph{constituents}). Let $H_1 \leq G_1$ and $H_2 \leq G_2$ be the subgroups fixing pointwise $X_2$ and $X_1$, respectively. Then we have

\begin{Theorem} {\rm \cite[Theorem 4.1]{kis}}
If $G$ is a permutation group as described above, then 
\begin{enumerate}
\item[\rm a)] $H_1$ and $H_2$ are normal subgroups of $G_1$ and $G_2$, respectively, 
\item[\rm b)] the factor groups $G_1/H_1$ and $G_2/H_2$ are abstractly isomorphic, and 
$$G = G_1[H_1] \oplus_\phi G_2[H_2],$$  
where $\phi$ is the isomorphism of the factor groups.
\end{enumerate}
\end{Theorem}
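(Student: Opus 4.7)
The plan is to identify each element of $G$ with a pair $(\sigma,\tau)$, where $\sigma$ is its restriction to $X_1$ and $\tau$ is its restriction to $X_2$, so that $G$ sits inside the direct sum $G_1\oplus G_2$. Under this identification, $H_1$ consists of the elements of the form $(\sigma,\mathrm{id})\in G$ and $H_2$ of those of the form $(\mathrm{id},\tau)\in G$. This viewpoint reduces everything to a computation about pairs.

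For part (a), to show $H_1\lhd G_1$, I take any $\sigma\in G_1$ (so $(\sigma,\tau)\in G$ for some $\tau\in G_2$) and any $\eta\in H_1$ (so $(\eta,\mathrm{id})\in G$) and compute the conjugate
\[
(\sigma,\tau)(\eta,\mathrm{id})(\sigma,\tau)^{-1}=(\sigma\eta\sigma^{-1},\mathrm{id})\in G,
\]
whence $\sigma\eta\sigma^{-1}\in H_1$. The argument for $H_2\lhd G_2$ is symmetric.

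For part (b), I define $\phi\colon G_1/H_1\to G_2/H_2$ by $\phi(\sigma H_1)=\tau H_2$ for any $\tau$ with $(\sigma,\tau)\in G$. The key technical step, and the main obstacle, is to verify this is well defined, since both the choice of representative $\sigma$ within the coset $\sigma H_1$ and the choice of companion $\tau$ must be shown irrelevant. This is exactly where the identification of $H_i$ as the set of companions of the identity matters: if $(\sigma,\tau),(\sigma,\tau')\in G$, then $(\mathrm{id},\tau^{-1}\tau')\in G$ forces $\tau H_2=\tau' H_2$; and if $\sigma'=\sigma\eta$ with $\eta\in H_1$, then multiplying $(\sigma,\tau)$ by $(\eta,\mathrm{id})\in G$ shows that a valid companion of $\sigma'$ lies in $\tau H_2$. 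Once $\phi$ is well defined, it is immediately a homomorphism (because $G$ is closed under multiplication), surjective (because $G_2$ is the $X_2$-restriction of $G$), and injective (because $\phi(\sigma H_1)=H_2$ gives some $(\sigma,\tau)\in G$ with $\tau\in H_2$, hence $(\mathrm{id},\tau)\in G$, and multiplying by its inverse gives $(\sigma,\mathrm{id})\in G$, i.e. $\sigma\in H_1$).

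Finally, I check the equality $G=G_1[H_1]\oplus_\phi G_2[H_2]$. The inclusion $\subseteq$ is immediate from the definition of $\phi$. For the reverse, given $(\sigma,\tau)$ with $\phi(\sigma H_1)=\tau H_2$, pick $\tau'$ with $(\sigma,\tau')\in G$ as guaranteed by the definition of $\phi$; then $\tau'^{-1}\tau\in H_2$, so $(\mathrm{id},\tau'^{-1}\tau)\in G$, and
\[
(\sigma,\tau)=(\sigma,\tau')\cdot(\mathrm{id},\tau'^{-1}\tau)\in G.
\]
The heart of the argument is thus the well-definedness of $\phi$, which in turn is nothing but the observation that $H_1$ and $H_2$ were deliberately chosen to be the kernels of the two natural projections of $G$ onto $G_2$ and $G_1$.
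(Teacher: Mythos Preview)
Your proof is correct and complete. Note, however, that the paper does not actually prove this theorem: it is quoted verbatim from \cite[Theorem~4.1]{kis} and stated without proof, so there is no in-paper argument to compare against. Your argument is the standard one (essentially Goursat's lemma specialized to the permutation-group setting, with $H_1$ and $H_2$ identified as the kernels of the two projections), and it is exactly what one would expect the original reference to contain.
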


\section{Preliminary results} 

First, we establish the representability of \emph{regular} abelian permutation groups and some other groups connected with the automorphism groups of Cayley graphs. Here, we make use of known results on the so called \emph{Cayley index} of abelian groups.

Recall, that each regular permutation group may be viewed as the action of an abstract group $G$ on itself given by left multiplication. In such a case we have $X=G$, and the resulting permutation group will be denoted by $(G,G)$, or simply by $G$, if it is clear from the context that we mean the corresponding regular permutation group. In particular, we use standard notation $Z_k^m$ and $Z_k^m \times Z_r^s$ for abstract abelian groups to denote also corresponding permutation groups obtained by the regular action of these groups on themselves (in particular, $C_n$ and $Z_n$ denote here the same permutation group).
 
Given an abstract group $G$, by $\Cay(G)$ we denote the complete directed coloured Cayley graph, that is one with all nontrivial elements as generators defining different colours. Observe that  $\Cay(G) = \Orb(G,G)$.  
By $\Cay^*(G)$ we denote the complete undirected coloured graph obtained from $\Cay(G)$ by identifying colours corresponding to $g$ and $g^{-1}$ for every nontrivial $g\in G$, and removing the loops. Again, $\Cay^*(G) = \Orb^*(G,G)$. 

Now, given a set $S$ of nontrivial elements of $G$ (i.e., different from the identity), by $\Cay^*(G;S)$ we define the coloured graph obtained from $\Cay^*(G)$ by identifying all colours not in $S$. To admit further identifications, let $\Pi$ be a partition of $S$. Then by $\Cay^*(G;\Pi)$ we define the coloured graph obtained from $\Cay^*(G;S)$ by identifying the colours in each block of $\Pi$. In our notation applied below, $\Pi$ is written simply by listing its blocks, a block is written in the square brackets, and in the case of a one-element block, brackets are omitted. To make notation as compact as possible we adopt the convention that $\Pi$
contains only one representative of each pair $\{g,g^{-1}\}$. In addition, we assume that there are nontrivial pairs $g,g^{-1}$ not in $S$, and all elements not is $S$ get colour $0$ in diagrams represented by non-edges. 
Then, the graph $\Cay^*(G;\Pi)$ is the complete directed graph whose edges are coloured with exactly $|\Pi|+1$ colours.

The following lemma presents the coloured graphs whose automorphism groups are $(Z_2^k,Z_2^k)$ for $k=2,3,4$. (The $k$-tuples of elements of $Z_2^k$ are denoted below by corresponding strings of $0$'s and $1$'s).

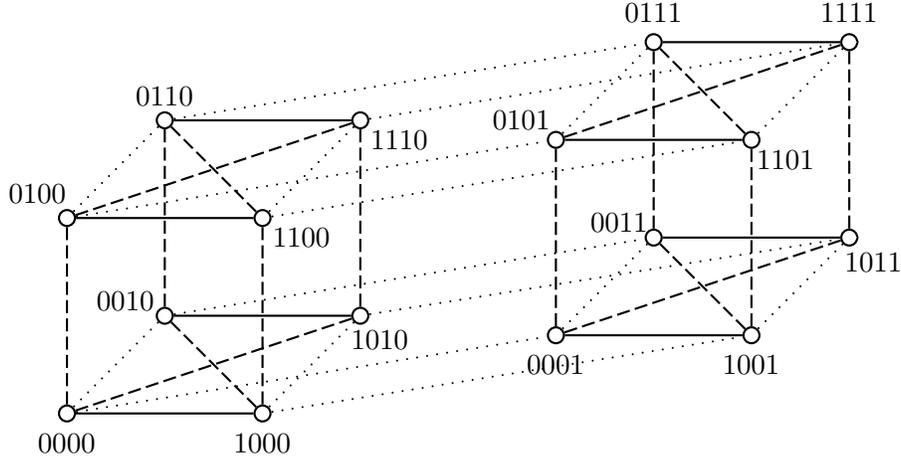
\begin{figure}
  \psset{unit=1.3cm, radius=0.12cm, arrowscale=1.8,arrowlength=0.8}
  \begin{center}
\begin{pspicture}(8,4.5)(0,-0.5)

\Cnode(0,0){v00}  
\Cnode(2,0){v10}  
\Cnode(0,2){v01} 
\Cnode(2,2){v11} 

\Cnode(1,1){w00}  
\Cnode(3,1){w10}  
\Cnode(1,3){w01} 
\Cnode(3,3){w11} 

\ncline{v00}{v10}
\ncline[linestyle=dashed, dash=5pt 2pt]{v00}{v01}
\ncline[linestyle=dashed, dash=5pt 2pt]{v11}{v10}
\ncline{v11}{v01}

\ncline{w00}{w10}
\ncline[linestyle=dashed, dash=5pt 2pt]{w00}{w01}
\ncline[linestyle=dashed, dash=5pt 2pt]{w11}{w10}
\ncline{w11}{w01}

\ncline[linestyle=dotted]{v00}{w00}
\ncline[linestyle=dotted]{v10}{w10}
\ncline[linestyle=dotted]{v01}{w01}
\ncline[linestyle=dotted]{v11}{w11}

\Cnode(5,0.8){dv00}  
\Cnode(7,0.8){dv10}  
\Cnode(5,2.8){dv01} 
\Cnode(7,2.8){dv11} 

\Cnode(6,1.8){dw00}  
\Cnode(8,1.8){dw10}  
\Cnode(6,3.8){dw01} 
\Cnode(8,3.8){dw11} 

\ncline{dv00}{dv10}
\ncline[linestyle=dashed, dash=5pt 2pt]{dv00}{dv01}
\ncline[linestyle=dashed, dash=5pt 2pt]{dv11}{dv10}
\ncline{dv11}{dv01}

\ncline{dw00}{dw10}
\ncline[linestyle=dashed, dash=5pt 2pt]{dw00}{dw01}
\ncline[linestyle=dashed, dash=5pt 2pt]{dw11}{dw10}
\ncline{dw11}{dw01}

\ncline[linestyle=dotted]{dv00}{dw00}
\ncline[linestyle=dotted]{dv10}{dw10}
\ncline[linestyle=dotted]{dv01}{dw01}
\ncline[linestyle=dotted]{dv11}{dw11}


\ncline[linestyle=dashed, dash=5pt 2pt]{v00}{w10}
\ncline[linestyle=dashed, dash=5pt 2pt]{v10}{w00}
\ncline[linestyle=dashed, dash=5pt 2pt]{v01}{w11}
\ncline[linestyle=dashed, dash=5pt 2pt]{v11}{w01}

\ncline[linestyle=dashed, dash=5pt 2pt]{dv00}{dw10}
\ncline[linestyle=dashed, dash=5pt 2pt]{dv10}{dw00}
\ncline[linestyle=dashed, dash=5pt 2pt]{dv01}{dw11}
\ncline[linestyle=dashed, dash=5pt 2pt]{dv11}{dw01}

\ncline[linestyle=dotted]{v00}{dv00}
\ncline[linestyle=dotted]{v10}{dv10}
\ncline[linestyle=dotted]{v01}{dv01}
\ncline[linestyle=dotted]{v11}{dv11}

\ncline[linestyle=dotted]{w00}{dw00}
\ncline[linestyle=dotted]{w10}{dw10}
\ncline[linestyle=dotted]{w01}{dw01}
\ncline[linestyle=dotted]{w11}{dw11}

\rput(-0,-0.3){0000} \rput(-0.3,2.25){0100}
\rput(2,-0.3){1000}
\rput(1,3.25){0110}

\rput(6,4.1){0111} \rput(8,4.1){1111}
\rput(8.25,1.55){1011}
\rput(7,0.5){1001}

\rput(0.6,1.15){0010}
\rput(5,0.5){0001}

\rput(3.2,0.75){1010} \rput(2.4,1.8){1100}
\rput(3.4,2.8){1110}

\rput(5.65,1.95){0011} \rput(4.65,3){0101}
\rput(7.35,2.6){1101}



\end{pspicture}
\end{center} \caption{$\Cay^*(Z_2^4; 1000, [0100, 1010], [0010,  0001])$}\label{fig1}
\end{figure}

\begin{Lemma}\label{lem:z234}
Each of the following coloured graphs represents the regular action of its defining group:
\begin{enumerate}
\item[(i)] $\Cay^*(Z_2^2; 10, 01)$, 
\item[(ii)] $\Cay^*(Z_2^3; 100, 010, 001)$, 
\item[(iii)] $\Cay^*(Z_2^4; 1000, [0100, 1010], [0010,  0001])$
\end{enumerate}

\end{Lemma}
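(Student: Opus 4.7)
The plan is to verify that in each case $Z_2^k \subseteq \Aut(\Cay^*(Z_2^k;\Pi))$ trivially (left translations preserve the Cayley structure), and then to prove equality by showing that the stabilizer of the identity element $\mathbf{0}$ in the automorphism group is trivial. Since the graph has $2^k = |Z_2^k|$ vertices, this forces $|\Aut|\leq 2^k$, which together with the inclusion gives regularity of the action.

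Cases (i) and (ii) are straightforward because every block of $\Pi$ there is a singleton: for an automorphism $\sigma$ fixing $\mathbf{0}$, the color-$c$ neighbor of $\mathbf{0}$ is unique for each non-zero color $c$, and hence fixed. This pins down $\sigma$ on the standard generators $10,01$ (respectively $100,010,001$). Moreover, a singleton color $\{h\}$ ensures that $\sigma(v+h)=\sigma(v)+h$ for every vertex $v$, so a short induction on the length of a word in these generators representing $v$ gives $\sigma=\mathrm{id}$.

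Case (iii) is the heart of the lemma. Assume $\sigma\in\Aut$ fixes $\mathbf{0}$. The singleton block $\{1000\}$ still forces $\sigma(1000)=1000$. To split the two-element color block $\{0100,1010\}$, I would intersect the color-$\{0100,1010\}$ neighborhood of $\mathbf{0}$, namely $\{0100,1010\}$, with the color-$\{0010,0001\}$ neighborhood of the now-fixed vertex $1000$, namely $\{1010,1001\}$: both sets are setwise fixed by $\sigma$ and meet in the singleton $\{1010\}$. Hence $\sigma(1010)=1010$, and therefore $\sigma(0100)=0100$. Swapping the roles of the two non-singleton blocks, the analogous intersection (color-$\{0010,0001\}$ neighborhood of $\mathbf{0}$ against color-$\{0100,1010\}$ neighborhood of $1000$) yields $\sigma(0010)=0010$ and $\sigma(0001)=0001$. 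Thus $\sigma$ fixes the four standard generators, and as a by-product also $\sigma(1100)=1100$ and $\sigma(1001)=1001$.

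The remaining task is propagation. The singleton color $\{1000\}$ gives $\sigma(v+1000)=\sigma(v)+1000$ for all $v$, and for each already-fixed vertex $v$ the two two-element color neighborhoods of $v$ are setwise fixed; intersecting such a neighborhood with a previously-fixed vertex (e.g., the color-$\{0100,1010\}$ neighbors of $0010$ are $\{0110,1000\}$, yielding $\sigma(0110)=0110$) pins down one new vertex at a time. Iterating through the remaining vertices of Hamming weight $2$, $3$, $4$ shows $\sigma=\mathrm{id}$. The main obstacle is precisely the presence of the two-element color blocks in (iii): they \emph{a priori} allow local color-preserving swaps, and one must check that the specific partition is arranged so that intersecting neighborhoods of different fixed vertices always cuts the ambiguity down to a singleton. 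Verifying this at every step is the combinatorial content of the lemma.
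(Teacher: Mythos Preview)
Your proposal is correct and follows essentially the same approach as the paper: both argue that the regular action of $Z_2^k$ is contained in the automorphism group and then show the stabilizer of $\mathbf{0}$ is trivial by exploiting, in case~(iii), the singleton colour $1000$ to fix $1000$ and then the intersection of the dashed neighbourhood of $\mathbf{0}$ with the dotted neighbourhood of $1000$ to isolate $1010$. The only cosmetic difference is in the propagation step---the paper organises it as fixing the sub-cube $x0y0$, then the cube $xyz0$, then $xyz1$, whereas you proceed vertex-by-vertex via neighbourhood intersections---but the underlying argument is identical.
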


\begin{proof}
We consider the case (iii). Denote the graph by $\Gamma$. It is pictured in Figure~\ref{fig1}. (Solid, dashed, and dotted lines correspond to colours $1000,0100$, and $0010$, respectively). We will speak correspondingly of solid, dashed, and dotted neighbors.

Since $\Gamma$ is obtained from a Cayley graph on $Z_2^4$ by identifying colours, it follows that its automorphism group $\Aut(\Gamma)$ contains the regular action of $Z_2^4$ (which in this notation is given by addition). We need only to prove that $\Aut(\Gamma)$ contains no other permutation. Let us consider the stabilizer $A_0$ of the vertex $0000$ in $\Aut(\Gamma)$. As the latter is transitive, it is enough to show that $A_0$ is trivial.

Since the only solid neighbor of $0000$ is $1000$, $A_0$ fixes $1000$ as well.  
Further, the only dashed neighbor of $0000$ that is a dotted neighbor of $1000$ is $1010$, while the only solid neighbor of the latter is $0010$. 
Thus the four vertices with coordinates $x0y0$ are fixed.
Considering their dashed neighbors, we see that also each vertex with coordinates $x1y0$ must be fixed. It follows that the vertices of the cube $xyz0$ are individually fixed. Considering their dotted neighbors, we conclude that the same holds for the cube $xyz1$, which completes the proof.

The cases (i) and (ii) are easier, and are left to the reader as an exercise. 
\end{proof}

Now, recall that an abelian permutation group $A = (A,X)$ is transitive if and only if it is regular. It follows that 
transitive abelian permutation groups $A$ can be identified with the regular action of $A$ (considered as an abstract group) on itself.
In this case, we have a special permutation on $A$ defined by $\alpha : x\to x^{-1}$ called the \emph{involution}. (For  properties and a very special role of this permutation see, e.g., \cite{DSV,IW}). It is easy to observe that the involution preserves the colours of the edges in $\Cay^*(A)$. This leads to the well-known fact:

\begin{Lemma}\label{lem:alpha}
Le $A$ be a regular abelian permutation group, and $\alpha$ its involution. If $\Gamma$ is a coloured graph such that $\Aut(\Gamma) \supseteq A$, then $\alpha \in \Aut(\Gamma)$.
\end{Lemma}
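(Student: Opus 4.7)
The plan is to show that $\alpha$ preserves the colours of $\Cay^*(A)$, because any coloured graph $\Gamma$ with $\Aut(\Gamma)\supseteq A$ is obtained from $\Cay^*(A)$ by merging colours: indeed, since $A$ preserves the colouring of $\Gamma$, each $A$-orbit on $P_2(A)$ must be monochromatic in $\Gamma$, so colour classes of $\Gamma$ are unions of $A$-orbits on $P_2(A)$. Thus once $\alpha$ preserves the finer colouring of $\Cay^*(A)$, it automatically preserves the coarser colouring of $\Gamma$.

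To show $\alpha$ preserves colours of $\Cay^*(A)$, I would write $A$ additively and identify its underlying set with $A$ itself. Two edges $\{x,y\}$ and $\{u,v\}$ have the same colour in $\Cay^*(A)$ precisely when they lie in a common orbit of $A$ acting on $P_2(A)$, i.e.\ when there exists $a\in A$ with $\{a+x,a+y\}=\{u,v\}$. Applied to $\alpha(\{x,y\})=\{-x,-y\}$, the choice $a=-x-y$ gives $a+x=-y$ and $a+y=-x$, so $\{a+x,a+y\}=\{-x,-y\}$. Hence $\alpha(\{x,y\})$ lies in the $A$-orbit of $\{x,y\}$ for every edge, and therefore $\alpha$ preserves the colouring of $\Cay^*(A)$ and a fortiori of $\Gamma$.

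There is no real obstacle here; the lemma is a standard observation in the theory of Cayley graphs. The one subtlety worth flagging is that the result genuinely uses the \emph{unordered} pair structure of a coloured graph: in $\Cay(A)$ (the digraph version), $\alpha$ sends the arc $(x,y)$ to $(-x,-y)$, and the translate $a+(x,y)=(a+x,a+y)$ can never equal $(-x,-y)$ unless $x=y$, so the analogous statement fails for digraphs. This is why the lemma is stated for coloured graphs only, and why the identification of the colours of $g$ and $g^{-1}$ in the definition of $\Cay^*(A)$ is exactly what makes the involution an automorphism.
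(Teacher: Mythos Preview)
Your proposal is correct and follows essentially the same approach as the paper: the paper simply observes (just before the lemma) that $\alpha$ preserves the colours of $\Cay^*(A)=\Orb^*(A,A)$ and then notes that any $\Gamma$ with $\Aut(\Gamma)\supseteq A$ is obtained from $\Cay^*(A)$ by identifying colours. You supply the explicit verification (via the translation $a=-x-y$) that the paper leaves as ``easy to observe,'' but the logical structure is identical.
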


This is so since $\Cay^*(A) = \Orb(A,A)$, and $\Gamma$ needs to be a graph obtained from $\Orb(A,A)$ by identification of colours.

It follows from this lemma that generally a regular abelian permutation group $A$ does not belong to $GR$, except for the case when $\alpha$ is trivial (the identity permutation). This is exactly the case, when  $A=Z_2^n$ for some $n\geq 0$. It is well known that for $n\geq 5$, $Z_2^n$ is representable as the automorphism group of a simple (Cayley) graph (see \cite{imr}, or claim 1.2 in \cite{IW}). 
Combining this with Lemma~\ref{lem:z234} we have

\begin{Lemma}\label{lem:z2n}
Let $A$ be a regular abelian permutation group. If $A=Z_2^n$ for some $n\geq 0$, then $A\in GR(4)$; otherwise, $A\notin GR$
\end{Lemma}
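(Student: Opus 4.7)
The plan is to split the statement into its two implications and prove each by combining the preceding material.

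For the ``otherwise'' direction, I would argue by contradiction using Lemma~\ref{lem:alpha}. Suppose $A$ is regular abelian but $A\neq Z_2^n$; then $A$ contains an element $x$ of order $>2$, so the involution $\alpha: x\mapsto x^{-1}$ is a nontrivial permutation of the base set. Crucially, $\alpha$ fixes the identity element $1_A$, but since $A$ is regular, the only element of $A$ fixing $1_A$ is the identity permutation; hence $\alpha\notin A$. If $\Gamma$ were a coloured graph with $\Aut(\Gamma)=A$, Lemma~\ref{lem:alpha} would force $\alpha\in\Aut(\Gamma)=A$, contradicting the above. Therefore no such $\Gamma$ exists and $A\notin GR$.

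For the forward direction, I would show that $A=Z_2^n$ is in $GR(4)$ for every $n\geq 0$ by treating small and large $n$ separately. The cases $n=0,1$ are trivial: the one-element group is represented by a single vertex, and $Z_2$ by an edge (a $2$-coloured graph). The cases $n=2,3,4$ are handled directly by Lemma~\ref{lem:z234}(i)--(iii); noting the convention that non-edges get colour $0$, these Cayley constructions use $|\Pi|+1$ colours, giving $3$, $4$, and $4$ colours respectively, hence all are in $GR(4)$. For $n\geq 5$, I would invoke the known fact (e.g.\ \cite{imr}, or Claim~1.2 in \cite{IW}) that $Z_2^n$ is representable as the automorphism group of a simple Cayley graph, which trivially lies in $GR(2)\subseteq GR(4)$.

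Since every case lands in $GR(4)$, the forward direction is complete, and together with the first paragraph this gives the dichotomy claimed in the lemma. There is really no main obstacle here: both directions are essentially bookkeeping on top of Lemma~\ref{lem:alpha}, Lemma~\ref{lem:z234}, and the cited result for $n\geq 5$. The only subtle point worth stating carefully is why $\alpha\notin A$ when $A\neq Z_2^n$, namely the regularity of $A$ combined with $\alpha$ fixing $1_A$; everything else is assembly.
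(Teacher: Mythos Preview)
Your proposal is correct and follows essentially the same approach as the paper: the ``otherwise'' direction via Lemma~\ref{lem:alpha} and the forward direction by combining Lemma~\ref{lem:z234} for $n=2,3,4$ with the cited result of \cite{imr,IW} for $n\geq 5$. You spell out a few points the paper leaves implicit (the trivial cases $n=0,1$ and the regularity argument for $\alpha\notin A$), but the substance is the same.
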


We note that $Z_2^3$ requires $4$ colours, in the sense that there exists no $k$-coloured graph with $k<4$ whose automorphism group is $Z_2^3$. The proof of this fact is rather tedious, but one may also check this with a help of computer. We mention it, because it means that the number $4$ in the results of this paper cannot be lowered.

The permutation group generated by left translations of a regular abelian group $A$ and its involution $\alpha$ plays a special role in this paper. We denote it by $A^+ = \<A,\alpha\>$. We note that if $\alpha$ is nontrivial, then $A^+$ is nonabelian. Nevertheless we need knowledge on representability of such groups, and to establish it, we apply Theorem~1 in~\cite{IW}.

\begin{Lemma}\label{lem:Aalpha}
If $A$ is a regular abelian permutation group, than $A^+ \in GR(2)$, except for the following groups: $A = Z_2^2, Z_2^3, Z_2^4, Z_4\times Z_2, Z_4\times Z_2^2, Z_3^2, Z_3^3$, or $Z_4^2$.  In any case, $A^+\in GR(4)$.
\end{Lemma}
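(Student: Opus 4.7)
The plan is to invoke Theorem~1 of~\cite{IW}, which characterises precisely those regular abelian groups $A$ for which $A^+=\langle A,\alpha\rangle$ occurs as the automorphism group of a Cayley graph $\Cay(A,S)$ on $A$ with symmetric connection set $S=S^{-1}$. The excluded abelian groups are exactly the eight listed in the statement; for every other $A$, the theorem yields a simple graph $\Gamma$ with $\Aut(\Gamma)=A^+$, and hence $A^+\in GR(2)$.

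It remains to handle the eight exceptional groups at the level $GR(4)$. For $A=Z_2^n$ with $n\in\{2,3,4\}$ every nontrivial element is its own inverse, so the involution $\alpha$ is the identity and $A^+=A$; Lemma~\ref{lem:z2n} then yields $A^+\in GR(4)$ directly. For each of the remaining five groups $Z_4\times Z_2$, $Z_4\times Z_2^2$, $Z_3^2$, $Z_3^3$ and $Z_4^2$, the plan is to exhibit a partition $\Pi$ of a subset of $A\setminus\{e\}$ such that $\Cay^*(A;\Pi)$ uses at most $4$ colours and has automorphism group exactly $A^+$.

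Here the inclusion $A^+\subseteq\Aut(\Cay^*(A;\Pi))$ is automatic: the left translations of $A$ preserve every orbital, and $\alpha$ preserves each $\{g,g^{-1}\}$-block by the observation underlying Lemma~\ref{lem:alpha}. The real work is bounding the stabiliser of $e$ in $\Aut(\Cay^*(A;\Pi))$ by $\langle\alpha\rangle$, and this is done, for each of the five small groups, by a finite check in the style of Lemma~\ref{lem:z234}: identify a small set of vertices whose images under a stabilising automorphism are forced by their coloured neighbourhoods, and propagate through the Cayley structure to conclude that the only options are the identity and $\alpha$.

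The hard part is designing the partition $\Pi$ so that it is coarse enough to use at most $4$ colours yet fine enough to forbid any spurious automorphism of $\Cay^*(A;\Pi)$ beyond $A^+$. This is most delicate for $Z_3^3$ and $Z_4^2$, whose abstract automorphism groups are the richest among the exceptional list and hence admit the largest pool of candidate unwanted symmetries of the $2^*$-closure that must be ruled out by the chosen colour identification.
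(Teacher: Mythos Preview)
Your approach is essentially identical to the paper's: the first claim is derived from \cite[Theorem~1]{IW} (the paper additionally cites remark~1.2 there to ensure $Z_2^2$ is on the list), the three $Z_2^n$ cases are handled via Lemma~\ref{lem:z234}/\ref{lem:z2n}, and for the remaining five exceptions one writes down an explicit $\Cay^*(A;\Pi)$ with at most four colours and verifies $\Aut=A^+$ by a stabiliser computation in the style of Lemma~\ref{lem:z234}. The only substantive thing missing from your plan is the actual choice of $\Pi$ in each of the five cases, which the paper supplies explicitly (e.g.\ $\Cay^*(Z_3^3;\,010,[001,100],[110,101])$); you would need to do the same, since without a concrete $\Pi$ there is nothing to check.
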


\begin{proof}
The first claim follows from \cite[Theorem~1]{IW} combined with the remark 1.2 preceding this theorem (which adds to the list of exceptions $Z_2^2$).  For the second claim, the three first cases follow by Lemma~\ref{lem:z234}. For the remaining cases the following five $4$-coloured graphs of the form $\Cay^*(A,P)$ have, in any case, the automorphism group equal to  $A^+$: 
 
$\Cay^*(Z_4\times Z_2; 10, 01)$,
$\Cay^*(Z_4\times Z_2^2; 001,011,[100,010])$,

$\Cay^*(Z_3^2; 10, 01,11)$, 
$\Cay^*(Z_3^3; 010,[001,100],[110,101])$, 

$\Cay^*(Z_4^2; 10,01,13).$ 

Checking this fact for each of the five graphs is routine, and similar to checking the case (iii) in the proof of Lemma~\ref{lem:z234}. 
\end{proof}

We note that $\Cay^*(Z_3^2; 10, 01,11)$ (pictured in the left hand side of Figure~\ref{fig2}) is a \emph{unique $4$-coloured graph} with the unique automorphism group $(Z_3^2)^+$, which means that no other coloured graph has such an automorphism group (see \cite{GK1}). In particular,  
the number $4$ in this lemma cannot be lowered.

We have also two exceptional intransitive abelian permutation group whose representability needs to be established directly. They are two nontrivial subgroups of the direct sum $Z_3^2\oplus Z_3^2$.

\begin{Lemma}\label{lem:z32z32}
Let $A$ be a nontrivial subgroup of $Z_3^2\oplus Z_3^2$ of the form
$$A = Z_3^2[H] \oplus_\phi  Z_3^2[H],$$
such that $H=Z_3$ or $H=I_9$. Then 
$A\in GR(4)$.
\end{Lemma}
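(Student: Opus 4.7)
The plan is to construct, for each admissible $(H,\phi)$, a $4$-coloured graph $\Gamma$ on $18$ vertices with $\Aut(\Gamma)=A$. The overall strategy is as follows: label the two orbits of $A$ as $X_1, X_2$, each identified with $Z_3^2$, and impose a coloured Cayley-type structure on each so that, by Lemma~\ref{lem:Aalpha}, any automorphism of $\Gamma$ restricts on $X_i$ to an element of $(Z_3^2)^+$. A carefully chosen bipartite pattern is then used to link the two restrictions, forbidding the involutions and preventing the two orbits from being swapped.

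For the parallel-sum case $H=I_9$ I may assume $\phi=\mathrm{id}$, so $A=2^{\approx}Z_3^2$ has order $9$. A natural design uses colours $1,2,3$ for the generators $10,01,11$ on $X_1$, colours $1,2,4$ for the generators $10,01,11$ on $X_2$, and colour $3$ again for a bipartite pattern joining $(1,v)\in X_1$ to both $(2,v)$ and $(2,v+g)\in X_2$, where $g\in Z_3^2\setminus\{0\}$ is fixed. The asymmetric assignment of colours to the $11$-direction forbids orbit-swaps, since an internal colour-$3$ edge on $X_1$ has no internal counterpart on $X_2$. The bipartite colour-$3$ pattern is preserved by the diagonal translations $v\mapsto v+u$, but not by the global involution $v\mapsto -v$ applied on both sides, which would require $g=-g$ in $Z_3^2$ and therefore fail.

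For the case $H=Z_3$ the same framework applies, but the bipartite colour-$3$ pattern is coarsened so that it depends only on the $H$-coset of $v$, which grants each orbit an additional independent copy of $H=Z_3$ and yields $|A|=27$, as required. In each of the two cases the inclusion $A\subseteq\Aut(\Gamma)$ is immediate from the construction.

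The main obstacle is the reverse inclusion $\Aut(\Gamma)\subseteq A$. Once orbit-swaps are ruled out by the internal-colour asymmetry, a candidate automorphism reduces to a pair $(\sigma,\tau)\in(Z_3^2)^+\oplus(Z_3^2)^+$; the bipartite colour-$3$ constraints must then be analysed to force $\tau=\phi\sigma\phi^{-1}$ and to exclude any involutive components. This is a stabilizer-style computation, routine but somewhat tedious, carried out in the same spirit as the cube-by-cube argument closing the proof of Lemma~\ref{lem:z234}(iii).
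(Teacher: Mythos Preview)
There is a genuine gap: your bipartite pattern does not exclude the involution, so $\Aut(\Gamma)\supsetneq A$ and the construction fails.

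You rule out the specific map $(v,w)\mapsto(-v,-w)$, but not its translates inside $(Z_3^2)^+\oplus(Z_3^2)^+$. Take $(\sigma,\tau)$ with $\sigma(v)=-v$ and $\tau(w)=-w+g$. Both $\sigma$ and $\tau$ lie in $(Z_3^2)^+$, so the internal Cayley colourings on $X_1$ and $X_2$ are preserved. On the bipartite side, the colour-$3$ edge $\{(1,v),(2,v)\}$ is sent to $\{(1,-v),(2,-v+g)\}$, and $\{(1,v),(2,v+g)\}$ is sent to $\{(1,-v),(2,-v)\}$; both images are again colour-$3$ bipartite edges. Hence $(\sigma,\tau)\in\Aut(\Gamma)\setminus A$. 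The ``routine but somewhat tedious'' stabiliser computation you defer would therefore reveal this extra automorphism rather than confirm $\Aut(\Gamma)=A$. The same obstruction carries over to your $H=Z_3$ variant, since coarsening the bipartite pattern to $H$-cosets can only enlarge the symmetry group. (More generally, any single-colour bipartite connection set $S\subseteq Z_3^2$ is preserved by some pair $(\,v\mapsto -v+a,\ w\mapsto -w+b\,)$ whenever $(b-a)-S=S$, and for $S=\{0,g\}$ the choice $b-a=g$ works.)

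A secondary problem is the colour count. With colours $1,2,3$ for the directions $10,01,11$ on $X_1$ and $1,2,4$ for the same directions on $X_2$, the remaining direction $12$ on each side falls into colour $0$, and the non-selected bipartite edges are also colour $0$. That is five colours $\{0,1,2,3,4\}$, so even if correct the argument would only give $A\in GR(5)$.

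The paper's construction (for $H=Z_3$) avoids the first obstacle by using \emph{two} distinct colours between the orbits, so that a candidate involutive pair $(\sigma,\tau)$ must satisfy two independent coset conditions with no common solution; and it avoids the second by using only two nonzero colours on $X_2$ (merging the directions $11,12$ into colour~$0$), accepting a larger internal symmetry there and letting the richer two-colour bipartite structure cut it back down to $A$.
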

\begin{proof}
First consider the case when $H$ is a subgroup isomorphic to $Z_3$. Note 
that in this case the decomposition formula above describes $A$ uniquely up to permutation isomorphism. Indeed, each subgroup $Z_3$ of $Z_3^2$ may be treated as one of the summands of suitably presented $Z_3^2$, and the isomorphism $\phi$ between groups isomorphic to $Z_3$ is unique up to renaming generators of $Z_3$. 

We construct a graph $\Gamma$ as a suitable composition of two graphs of the form $\Cay^*(Z_2^3;\Pi)$. 
The first component of $\Gamma$ (corresponding to the first orbit of $A$) is $\Cay^*(Z_2^3;01,11,10)$, and the second one (corresponding to the second orbit of $A$) is  $\Cay^*(Z_2^3;10,01)$ (e assume here that the colours $1,2,3$ are assigned to edges in accordance with the position on the list, so in particular,  the edge $(00,01)$ in the first graph has the same colour $1$ as the edge $(00,10)$ in the second graph). To describe the colours of edges between the two components, we assume that the pairs in the second component are denoted with overline. (Thus, the first component consist of pairs $xy$, where $x,y \in \{0,1,2\}$, and the second component consists of analogous overlined pairs $\overline{xy}$). Then, we put the colour 3 for the edge $(00,\overline{00})$ and the colour 1 for the edge $(00,\overline{01})$. This is done under assumption that $\Aut(\Gamma) \supseteq A$, where the first $Z_3$ subgroup in the decomposition $A = Z_3^2[Z_3] \oplus_\phi  Z_3^2[Z_3]$ is equal to $(Z_3,\{00,10,20\})$, and the second to $(Z_3,\{\overline{00},\overline{10},\overline{20}\})$. This 
assumption forces the colours for other edges in those orbitals of $A$ that contain the mentioned edges. The remaining edges are coloured $0$. Note, that here is no edge of colour $2$ between the components. 

The graph $\Gamma$ is illustrated in Figure~\ref{fig3}. The dashed, dotted, and solid lines correspond to colours $1,2$, and $3$, respectively. To make the drawing  more readable, we have applied the convention that each line between components ending with double arrows correspond to nine edges in the given colour joining each vertex in the horizontal line pointed out by the arrows in the left component with each vertex in the horizontal line pointed out by the arrows in the right component.

\begin{figure}
  \psset{unit=1.8cm, radius=0.12cm, arrowscale=1.8,arrowlength=0.8}
  \begin{center}
\begin{pspicture}(6,3.5)(0,0.5)

\rput(0.25,0.75){$00$}
\rput(1.25,0.7){$10$}
\rput(2.25,0.75){$20$}

\rput(3.65,0.75){$\overline{00}$}
\rput(4.65,0.7){$\overline{10}$}
\rput(5.65,0.75){$\overline{20}$}

\rput(0.25,3.2){$02$}
\rput(1.25,3.2){$12$}
\rput(2.25,3.2){$22$}

\rput(0.35, 1.8){$01$}
\rput(1.35,1.75){$11$}
\rput(2.35,1.8){$21$}

\rput(3.55,2.2){$\overline{01}$}
\rput(4.55,2.2){$\overline{11}$}
\rput(5.55,2.2){$\overline{21}$}

\rput(3.65,3.2){$\overline{02}$}
\rput(4.65,3.2){$\overline{12}$}
\rput(5.65,3.2){$\overline{22}$}

\Cnode(0.2,1){v00}  
\Cnode(0.2,2){v01}  
\Cnode(0.2,3){v02} 

\Cnode(1.2,1){v10}  
\Cnode(1.2,2){v11}  
\Cnode(1.2,3){v12} 

\Cnode(2.2,1){v20}  
\Cnode(2.2,2){v21}  
\Cnode(2.2,3){v22} 

\ncline{v00}{v10}
\ncline{v10}{v20}
\ncarc[arcangle=-15]{v00}{v20}

\ncline{v01}{v11}
\ncline{v11}{v21}
\ncarc[arcangle=-15]{v01}{v21}

\ncline{v02}{v12}
\ncline{v12}{v22}
\ncarc[arcangle=-15]{v02}{v22}

\ncline[linestyle=dashed, dash=5pt 2pt]{v00}{v01}
\ncline[linestyle=dashed, dash=5pt 2pt]{v01}{v02}
\ncarc[linestyle=dashed, dash=5pt 2pt,arcangle=15]{v00}{v02}

\ncline[linestyle=dashed, dash=5pt 2pt]{v10}{v11}
\ncline[linestyle=dashed, dash=5pt 2pt]{v11}{v12}
\ncarc[linestyle=dashed, dash=5pt 2pt,arcangle=15]{v10}{v12}

\ncline[linestyle=dashed, dash=5pt 2pt]{v20}{v21}
\ncline[linestyle=dashed, dash=5pt 2pt]{v21}{v22}
\ncarc[linestyle=dashed, dash=5pt 2pt,arcangle=15]{v20}{v22}

\ncline[linestyle=dotted]{v00}{v11}
\ncline[linestyle=dotted]{v11}{v22}
\ncarc[linestyle=dotted, arcangle=-10]{v00}{v22}

\ncline[linestyle=dotted]{v02}{v21}
\ncline[linestyle=dotted]{v02}{v10}
\ncline[linestyle=dotted]{v10}{v21}

\ncline[linestyle=dotted]{v20}{v12}
\ncline[linestyle=dotted]{v20}{v01}
\ncline[linestyle=dotted]{v01}{v12}


\Cnode(3.7,1){w00}  
\Cnode(3.7,2){w01}  
\Cnode(3.7,3){w02} 

\Cnode(4.7,1){w10}  
\Cnode(4.7,2){w11}  
\Cnode(4.7,3){w12} 

\Cnode(5.7,1){w20}  
\Cnode(5.7,2){w21}  
\Cnode(5.7,3){w22} 

\ncline[linestyle=dashed, dash=5pt 2pt]{w00}{w10}
\ncline[linestyle=dashed, dash=5pt 2pt]{w10}{w20}
\ncarc[linestyle=dashed, dash=5pt 2pt,arcangle=-15]{w00}{w20}

\ncline[linestyle=dashed, dash=5pt 2pt]{w01}{w11}
\ncline[linestyle=dashed, dash=5pt 2pt]{w11}{w21}
\ncarc[linestyle=dashed, dash=5pt 2pt,arcangle=-15]{w01}{w21}

\ncline[linestyle=dashed, dash=5pt 2pt]{w02}{w12}
\ncline[linestyle=dashed, dash=5pt 2pt]{w12}{w22}
\ncarc[linestyle=dashed, dash=5pt 2pt,arcangle=-15]{w02}{w22}

\ncline[linestyle=dotted]{w00}{w01}
\ncline[linestyle=dotted]{w01}{w02}
\ncarc[linestyle=dotted,arcangle=-15]{w00}{w02}

\ncline[linestyle=dotted]{w10}{w11}
\ncline[linestyle=dotted]{w11}{w12}
\ncarc[linestyle=dotted,arcangle=-15]{w10}{w12}

\ncline[linestyle=dotted]{w20}{w21}
\ncline[linestyle=dotted]{w21}{w22}
\ncarc[linestyle=dotted,arcangle=-15]{w20}{w22}

\ncline[nodesep=3pt]{<<->>}{v22}{w02}
\ncline[nodesep=3pt]{<<->>}{v21}{w01}
\ncline[nodesep=3pt]{<<->>}{v20}{w00}

\ncline[linestyle=dashed, dash=5pt 2pt, nodesep=5pt]{<<->>}{v22}{w00}
\ncline[linestyle=dashed, dash=5pt 2pt, nodesep=5pt]{<<->>}{v20}{w01}
\ncline[linestyle=dashed, dash=5pt 2pt, nodesep=5pt]{<<->>}{v21}{w02}

\end{pspicture}
\end{center} \caption{$\Aut(\Gamma) = Z_3^2[Z_3] \oplus_\phi  Z_3^2[Z_3]$}\label{fig3}
\end{figure}
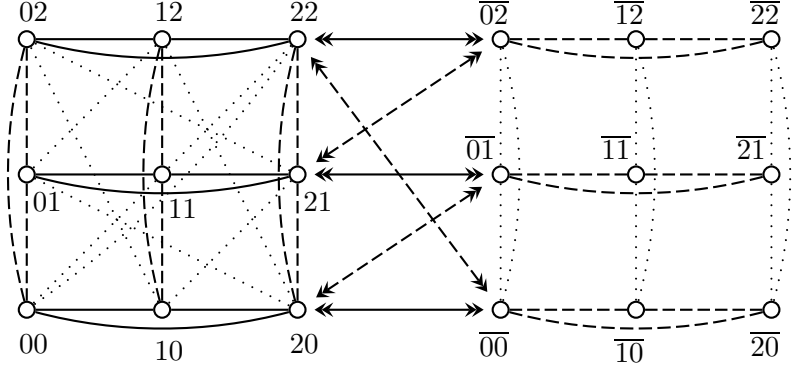

Thus we have  $\Aut(\Gamma) \supseteq A=Z_3^2[Z_3] \oplus_\phi  Z_3^2[Z_3]$, where $\phi$ is the natural isomorphism between $Z_3$-subgroups. 
Moreover,  $\Aut(\Gamma)$ preserves the orbits of $A$, since the quadruples of colour degrees in the first component is different from that in the second component. 

Now, consider the stabilizer $S_0$ of point $00$ in $\Aut(\Gamma)$. To get the equality above, it is enough to show that the cardinality $|S_0|=3$. Because of the dotted edges (colour $2$) coming out from $00$, $S_0$ fixes the set $\{11,22\}$. We show that, actually, $S_0$ fixes individually points in this pair. Suppose, to the contrary, that $S_0$ contains an automorphism $\sigma$ transposing $11$ and $22$.

Then, $\sigma$ transposes sets $\{10,11,12\}$ and  $\{20,21,22\}$, which are disjoint triangles in colour $3$ (solid line) in the first component. 
Due to edges in colour $3$ between the components, $\sigma$ transposes also the sets: 
$\{\overline{01},\overline{11},\overline{21}\}$ and  $\{\overline{02},\overline{12},\overline{22}\}$, which are the triangles in colour $1$  (dashed lines) in the second component. Similarly, due to edges in colour $1$ between the components, $\sigma$ transposes the sets: 
$\{\overline{00},\overline{10},\overline{20}\}$ and  $\{\overline{02},\overline{12},\overline{22}\}$, which contradicts the previous claim. 

Thus, the set $\{00,11,12\}$ remains individually fixed in $S_0$. It is now easy to see that each point in the first component remains fixed under $S_0$.  Consequently, (because of solid edges  between the two components) each triangle in colour $1$ in the second component remains fixed, and (due to the edges coloured $2$ in the second component), for every automorphism, the permutation in one of this triangles determines permutations in other triangles.  Thus, the cardinality of the stabilizer of $00$ in $\Aut(\Gamma)$ is equal to $3$, proving the lemma for $H=Z_3$.

Consider now the case when $H=I_9$ is the trivial subgroup of $Z_3^2$. Then $A$ is the parallel sum  
$A = 2^\approx Z_3^2$.  
In this case we construct a graph $\Gamma$ as a composition of $\Cay^*(Z_2^3;10,01,11)$ and  $\Cay^*(Z_2^3;01,10)$. The edges of colour $1$ join each pair $(xy,\overline{xy})$, and the other edges are coloured $0$. The proof that $\Aut(\Gamma) = 2^\approx Z_3^2$ is similar as in the first case (but simpler), so we leave it to the reader. 
\end{proof}

In fact, the parallel sum $2^\approx Z_3^2$ is known to belong to $GR(2)$. A suitable construction is contained in the proof of the main result in \cite{bab}.

\section{The structure of abelian permutation groups}

From now on $A = (A,X)$ denotes an abelian permutation group on a fixed set $X$, with orbits $X_1,\ldots , X_r$. Then, by $A_i=A|_{X_i}$ we denote the restriction of $A$ to $X_i$, by $A^j_i$ the restriction of the pointwise stabilizer of the orbit $X_j$ to the orbit $X_i$, and by $A^*_i$  the restriction of the pointwise stabilizer of the set $V\setminus X_i$ to the orbit $X_i$. 

Two orbits $X_i$ and $X_j$, $i\neq j$, are called \emph{adjacent} if the factor group $A_i/A_i^j$ is not an elementary abelian $2$-group. We note that this relation is symmetric. Indeed, the restriction $B$ of $A$ to $X_i\cup X_j$ can be presented as $B= A_i[A_i^j] \oplus_\phi A_j[A_j^i]$. This means, in particular, that  $A_i/A_i^j$ is isomorphic to  $A_j/A_j^i$, which implies the claim. 
Accordingly, an orbit $X_i$ of $A$ will be called \emph{isolated}, if it is not adjacent to any orbit $X_j$, 
$j\neq i$. 

Let us recall, that a permutation $\sigma$ preserving orbits of $A$ is called \emph{$2$-orbit-compatible} with the permutation group $A$,  if for each pair of orbits $X_i$ and $X_j$, $i\neq j$, the restriction of $\sigma$ to $X_i\cup X_j$ belongs to the restriction of the group $A$ to $X_i\cup X_j$. The group $A$ is $2$-\emph{orbit-closed} if every permutation that is $2$-{orbit-compatible} with $A$ belongs to $A$. The $2$-{orbit-closure} of $A$, denoted $\clo{A}$, is the group consisting of all permutations $2$-{orbit-compatible} with $A$. 
Obviously, $\clo{A}$ has the same orbits as $A$. Moreover, it has the same components $A_i$, $A_i^j$, $A_i^*$.  In particular, we have the following.

\begin{Lemma}\label{lem:clo}
Let $A$ be a permutation group. Then the following hold
\begin{enumerate}
\item  $A$ is abelian if and only if $\clo{A}$ is abelian.
\item An orbit $X_i$ is isolated in $A$ if and only if $X_i$ is isolated in $\clo{A}$.
\end{enumerate}
\end{Lemma}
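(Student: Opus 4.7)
The plan is to deduce both parts directly from the observation made just before the statement: the constituents $A_i$, the relative subgroups $A_i^j$, and the local stabilizers $A_i^*$ of $A$ coincide with the corresponding objects for $\clo{A}$. The equality $\clo{A}_i = A_i$ follows from the definition of $\clo{A}$ applied to the ``diagonal'' pair $(X_i,X_i)$ (or equivalently, from restricting a $2$-orbit-compatible $\sigma$ to any $X_i\cup X_j$ and then further to $X_i$); the equality $\clo{A}_i^j = A_i^j$ follows because any $\sigma\in\clo{A}$ that pointwise fixes $X_j$ has $\sigma|_{X_i\cup X_j}\in A|_{X_i\cup X_j}$, and this restriction automatically comes from an element of $A$ that pointwise fixes $X_j$. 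Both inclusions in the reverse direction are immediate from $A\le\clo{A}$.

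For part~(1), one direction is trivial since any subgroup of an abelian group is abelian, and $A\le\clo{A}$. For the converse I use the elementary fact that an intransitive permutation group $G$ with orbits $X_1,\ldots,X_r$ is abelian if and only if every constituent $G|_{X_i}$ is abelian: two permutations commute globally iff their restrictions commute on each orbit (since orbits partition $X$), and these restrictions range over the constituent group. Applying this to both $A$ and $\clo{A}$ and invoking $\clo{A}_i = A_i$ gives the equivalence.

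For part~(2), by definition $X_i$ is isolated in $A$ iff for no $j\ne i$ is the factor group $A_i/A_i^j$ a non-elementary-abelian-$2$-group, and analogously for $\clo{A}$ with $\clo{A}_i/\clo{A}_i^j$. Since $\clo{A}_i = A_i$ and $\clo{A}_i^j = A_i^j$, these factor groups coincide orbit-by-orbit, so the adjacency relations on the orbit set of $A$ and of $\clo{A}$ are identical, and the notion of being isolated is preserved.

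The proof is thus essentially bookkeeping, and the only step requiring a line of justification is the identification of the components of $\clo{A}$ with those of $A$; the main (minor) obstacle is giving a clean argument that $\clo{A}_i^j \subseteq A_i^j$, which is precisely the content of the preceding paragraph.
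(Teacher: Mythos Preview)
Your argument is correct and matches the paper's approach: the paper does not give a separate proof of this lemma at all, but records it as an immediate consequence of the sentence preceding it (``it has the same components $A_i$, $A_i^j$, $A_i^*$''), which is exactly what you unpack. Your verification that $\clo{A}_i^j\subseteq A_i^j$ and your observation that a permutation group is abelian iff all its constituents are abelian are the right ingredients to turn that sentence into a proof.
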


The notion of $2$-orbit closure arise naturally, when one considers automorphism groups of coloured graphs and digraphs. All these groups are obviously $2$-orbit closed. It is enough to observe that a coloured graph (or digraph) $\Gamma$ has exactly two kinds of edges with regard to its automorphism group: those joining vertices within an orbit of the group and those joining vertices between two different orbits. It is easily seen that a permutation $2$-orbit-compatible with $\Aut(\Gamma)$ preserves the colours of all edges. The following is an obvious property of $2$-\emph{orbit-closed} groups.

\begin{Lemma}\label{lem:2equal}
Let $A$ and $B$ be $2$-orbit-closed permutation groups acting on the same set $X$ and having the same orbits. If for any two orbits $O$ and $Q$, the restriction $A|_{O\cup Q} = B|_{O\cup Q}$, then $A=B$.
\end{Lemma}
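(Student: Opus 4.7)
The plan is to apply the definition of $2$-orbit-closure directly, establishing $A\subseteq B$ and then invoking symmetry for the reverse inclusion. No heavy machinery is required; this is essentially a definition-unwinding.

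Fix $\sigma\in A$. Since $A$ and $B$ have the same orbits by hypothesis, $\sigma$ preserves the orbits of $B$ (it preserves those of $A$ automatically). Now pick any two distinct orbits $O$ and $Q$ of $B$. The restriction $\sigma|_{O\cup Q}$ lies in $A|_{O\cup Q}$ simply because $\sigma\in A$. The hypothesis $A|_{O\cup Q}=B|_{O\cup Q}$ then yields $\sigma|_{O\cup Q}\in B|_{O\cup Q}$, meaning that there exists some $\tau\in B$ with $\tau|_{O\cup Q}=\sigma|_{O\cup Q}$. This is exactly the defining condition for $\sigma$ to be $2$-orbit-compatible with $B$. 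Since $B$ is $2$-orbit-closed, $\sigma\in B$. Swapping the roles of $A$ and $B$ gives $B\subseteq A$, and hence $A=B$.

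The argument has no real obstacle; the only point worth flagging is the edge case where $A$ (equivalently $B$) has a single orbit. There the $2$-orbit-compatibility condition is vacuous, so one would need to read the hypothesis as covering the case $O=Q$ (which then reduces to $A|_X=B|_X$ and immediately forces equality), or simply to restrict attention to the intransitive setting that is the focus of this section. Either reading makes the lemma go through uniformly.
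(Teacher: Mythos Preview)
Your proof is correct and is exactly the obvious definition-unwinding the paper has in mind; in fact the paper gives no proof at all, simply introducing the lemma as ``an obvious property of $2$-orbit-closed groups.'' Your remark on the single-orbit edge case is apt: the paper's two formulations of $2$-orbit-compatibility (in the introduction without the restriction $O\neq Q$, in Section~4 with $i\neq j$) differ precisely on this point, and either of your suggested readings resolves it.
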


We have also the following crucial characterization

\begin{Lemma}\label{lem:isol}
If $A$ is $2$-orbit-closed, then for every orbit $X_i$ of $A$, $X_i$ is isolated in $A$ if and only if 
$A_i/A_i^*$ is an elementary abelian $2$-group.
\end{Lemma}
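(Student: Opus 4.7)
The plan is to reformulate the statement in terms of subgroups of $A_i$. Let $H_j$ denote the pointwise stabilizer in $A$ of the orbit $X_j$, and let $H^\ast$ denote the pointwise stabilizer of $V\setminus X_i$; then by definition $A_i^j = H_j|_{X_i}$ and $A_i^\ast = H^\ast|_{X_i}$. Since $V\setminus X_i \supseteq X_j$ for every $j\neq i$, we have $H^\ast \leq H_j$, hence $A_i^\ast \leq A_i^j$ for every $j\neq i$. The whole argument reduces to establishing the identity
\[
A_i^\ast \;=\; \bigcap_{j\neq i} A_i^j,
\]
from which the lemma follows because $A_i$ embeds, via the diagonal map, into $\prod_{j\neq i} A_i/A_i^j$ with kernel $\bigcap_{j\neq i} A_i^j = A_i^\ast$.

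For the easy direction (``$A_i/A_i^\ast$ elementary abelian $2$-group implies $X_i$ isolated''), no use of $2$-orbit-closedness is needed: each $A_i/A_i^j$ is a quotient of $A_i/A_i^\ast$ (because $A_i^\ast \leq A_i^j$), and quotients of elementary abelian $2$-groups are elementary abelian $2$-groups, so $X_i$ is adjacent to no $X_j$.

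For the converse (``$X_i$ isolated implies $A_i/A_i^\ast$ elementary abelian $2$-group''), the key step, which uses the $2$-orbit-closedness of $A$, is to prove the inclusion $\bigcap_{j\neq i} A_i^j \leq A_i^\ast$. Let $\sigma$ be a permutation of $X_i$ belonging to every $A_i^j$; so for each $j\neq i$ there exists $h_j \in H_j$ with $h_j|_{X_i} = \sigma$. Extend $\sigma$ to a permutation $\tilde\sigma$ of $V$ by letting it act as the identity outside $X_i$. On a pair of orbits $X_i\cup X_j$ (with $j\neq i$), $\tilde\sigma$ coincides with $h_j$, and on any pair $X_j \cup X_k$ with $j,k\neq i$ it is the identity; hence $\tilde\sigma$ is $2$-orbit-compatible with $A$, and by the assumption $\tilde\sigma \in A$. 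Since $\tilde\sigma$ fixes $V\setminus X_i$ pointwise, $\tilde\sigma \in H^\ast$ and $\sigma = \tilde\sigma|_{X_i} \in A_i^\ast$.

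Once this identity is in hand, isolation of $X_i$ means every factor $A_i/A_i^j$ is elementary abelian of exponent $2$, and their direct product is too; the embedding $A_i/A_i^\ast \hookrightarrow \prod_{j\neq i} A_i/A_i^j$ finishes the proof. The main obstacle is precisely the inclusion $\bigcap_j A_i^j \subseteq A_i^\ast$: a priori, knowing that $\sigma$ can be realized on $X_i$ by an element of $A$ that fixes $X_j$ pointwise, separately for each $j$, does not furnish a single element of $A$ that fixes $V\setminus X_i$ pointwise. It is exactly the hypothesis of $2$-orbit-closedness that allows one to glue the local data together into such a global element.
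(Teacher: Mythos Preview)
Your proof is correct and follows essentially the same route as the paper's: both hinge on the identity $A_i^\ast = \bigcap_{j\neq i} A_i^j$ for $2$-orbit-closed groups, established by exactly the gluing argument you give (extending $\sigma$ trivially outside $X_i$ and invoking $2$-orbit-compatibility). The only cosmetic difference is that the paper deduces the equivalence from this identity by an element-wise contraposition (tracking whether $x^2$ lies in $A_i^\ast$ versus some $A_i^j$), whereas you phrase the same deduction structurally via the quotient maps $A_i/A_i^\ast \twoheadrightarrow A_i/A_i^j$ and the embedding $A_i/A_i^\ast \hookrightarrow \prod_{j\neq i} A_i/A_i^j$; your observation that the direction ``$A_i/A_i^\ast$ elementary abelian $\Rightarrow$ isolated'' needs only the trivial inclusion $A_i^\ast \leq A_i^j$ (and not $2$-orbit-closedness) is a nice clarification that the paper's symmetric presentation obscures slightly.
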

\begin{proof} 
First observe that for each $i\leq r$, $A_i^* \subseteq  \bigcap_{j\neq i} A_i^j$. We show that for $2$-orbit-closed groups the converse inclusion holds, as well. Indeed, suppose that $\tau\in A_i^j$ for each $j\neq i$. It follows, that for each $j\neq i$, there is a permutation $\sigma_j \in A$, such that its restriction to $X_i\cup X_j$ is equal to $\tau$ extended to $X_j$ by fixing all points in $X_j$. Consequently, the permutation $\sigma$ equal to $\tau$ and fixing all points in $X\setminus X_i$ is $2$-orbit-compatible with $A$, and therefore belongs to $A$. Whence, $\tau\in A_i^*$, as required.

Now, we prove our claim by contraposition. Suppose that $A_i/A_i^*$ is not an elementary abelian $2$-group, that is, it has an element $xA_i^*$ of order $> 2$. This is equivalent to that $x^2 \notin A_i^*$, which means (by what proved above) that there is $j\neq i$ such that $x^2 \notin A_i^j$. The latter is equivalent to that  $A_i/A_i^j$  has an element $xA_i^*$ of order $> 2$, that is, $A_i/A_i^j$  is not an elementary abelian $2$-group. This means that $X_i$ is not isolated, and the result follows.
\end{proof}

Note that the factor group $A_i/A_i^*$ is an abstract group; we do not define any action of this group.  It plays a special role in our main result below, and we will refer to it as to \emph{the factor group of the orbit} $X_i$.

\section{Characterization of $2^*$-closed abelian permutation groups}

We state our main result

\begin{Theorem}\label{th:main}
Let $A$ be a nontrivial abelian permutation group. Then $A$ is the automorphism group of a coloured graph if and only if the following conditions hold
\begin{enumerate}
\item $A$ is $2$-orbit-closed, and 
\item for every orbit $X_i$ of $A$, if the factor group $A_i/A_i^*$  is an elementary abelian $2$-group, then so is $A_i$. 
\end{enumerate} 
\end{Theorem}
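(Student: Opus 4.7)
The plan is to prove necessity and sufficiency separately. Condition~(1) in the forward direction is immediate from the general observation in Section~2 that every automorphism group of a coloured graph is $2$-orbit-closed. Condition~(2) in the forward direction rests on an involution argument leveraging Lemma~\ref{lem:alpha} and the subdirect sum decomposition. For sufficiency I would build $\Gamma$ orbit by orbit using Lemmas~\ref{lem:z234}--\ref{lem:z32z32}, and then colour the inter-orbit edges so as to encode the subdirect-sum coupling of $A$ on each pair of orbits.

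For the forward direction of~(2), suppose $A=\Aut(\Gamma)$ and $A_i/A_i^*$ is an elementary abelian $2$-group, so that by Lemma~\ref{lem:isol} the orbit $X_i$ is isolated. Since $\Aut(\Gamma|_{X_i})\supseteq A_i$ and $A_i$ is regular abelian, Lemma~\ref{lem:alpha} places the involution $\alpha_i\colon x\mapsto x^{-1}$ of $A_i$ into $\Aut(\Gamma|_{X_i})$. Let $\tilde\alpha_i$ denote its extension to $X$ by the identity on $X\setminus X_i$. I claim $\tilde\alpha_i\in \Aut(\Gamma)=A$: it preserves intra-$X_i$ colours by the choice of $\alpha_i$ and intra-$X_j$ colours trivially, while for a cross-pair $\{x,y\}$ with $x\in X_i$, $y\in X_j$, unpacking the subdirect sum $A|_{X_i\cup X_j}=A_i[A_i^j]\oplus_\phi A_j[A_j^i]$ shows that $\tilde\alpha_i$ fixes the $A$-orbit of $\{x,y\}$ iff $x^{-2}\in A_i^j$, hence $\tilde\alpha_i$ preserves all cross-colours between $X_i$ and $X_j$ iff $A_i/A_i^j$ is an elementary abelian $2$-group, which is precisely the condition that $X_j$ is not adjacent to $X_i$ and therefore holds for every $j\neq i$. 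Thus $\tilde\alpha_i\in A$. Since $\tilde\alpha_i$ fixes $X\setminus X_i$ pointwise, its restriction to $X_i$ is an element $\alpha_i\in A_i^*\leq A_i$; but $A_i$ acts by left translations, and $x\mapsto x^{-1}$ is a left translation only when $A_i$ is an elementary abelian $2$-group.

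For sufficiency, assume (1) and (2). By Lemma~\ref{lem:isol} combined with condition~(2), each orbit $X_i$ satisfies one of the following: $A_i\cong Z_2^n$ for some $n\geq 0$, or $X_i$ is non-isolated. In the first case I take $\Gamma_i$ on $X_i$ with $\Aut(\Gamma_i)=A_i$ from Lemma~\ref{lem:z2n}, and in the second I take $\Gamma_i$ on $X_i$ with $\Aut(\Gamma_i)=A_i^+$ from Lemma~\ref{lem:Aalpha}, substituting the special constructions of Lemmas~\ref{lem:z234} and~\ref{lem:z32z32} for the small exceptional groups listed in Lemma~\ref{lem:Aalpha}. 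For each pair of orbits $X_i,X_j$ I colour the cross-pairs $\{x,y\}$, $x\in X_i$, $y\in X_j$, by the $A|_{X_i\cup X_j}$-orbit of $\{x,y\}$ on $P_2(X_i\cup X_j)$, allocating a disjoint fresh palette of colours for each orbit pair so that the colour-degree counts separate the orbits of $A$. Then $\Aut(\Gamma)\supseteq A$, while $\Aut(\Gamma)$ is automatically $2$-orbit-closed and shares its orbits with $A$; by Lemma~\ref{lem:2equal} the equality $\Aut(\Gamma)=A$ reduces to showing $\Aut(\Gamma)|_{X_i\cup X_j}=A|_{X_i\cup X_j}$ for every pair. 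The intra-orbit choices confine $\Aut(\Gamma)|_{X_i}$ inside $A_i^+$ and $\Aut(\Gamma)|_{X_j}$ inside $A_j^+$, and the cross-colouring reproduces the orbital structure of the subdirect sum; running the necessity computation in reverse, the involution $\alpha_i$ is incompatible with cross-edges to any adjacent $X_j$, which kills the extra factor in $A_i^+$ exactly when $X_i$ is non-isolated and simultaneously pins down the isomorphism $\phi$.

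The crux is this final identification. The cross-colouring must perform two tasks at once---kill every spurious involution $\alpha_i$ that Lemma~\ref{lem:Aalpha} left free on non-isolated orbits, and encode the precise subdirect-sum isomorphism $\phi$ rather than just a looser coupling---without admitting further symmetries between orbits of similar abstract type. Handling the finite list of exceptional groups in Lemma~\ref{lem:Aalpha}, namely $Z_2^n$ for $n\leq 4$, $Z_3^2$, $Z_3^3$, $Z_4\times Z_2$, $Z_4\times Z_2^2$ and $Z_4^2$, is where the specialised graphs of Lemmas~\ref{lem:z234} and~\ref{lem:z32z32} must enter, and the careful case analysis of how these building blocks interlock across orbit pairs is where I expect the bulk of the technical work to lie.
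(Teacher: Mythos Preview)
Your necessity argument matches the paper's Lemma~\ref{lem:oifpart}. For sufficiency you take a genuinely different route. The paper argues by induction on the number of orbits (Lemma~\ref{lem:ifpart}): it peels off one orbit $X_i$, invokes the hypothesis on the $2$-orbit closure of $A|_{X\setminus X_i}$, and joins $X_i$ back using only colours $0$ and $1$ on cross-pairs (encoding just the coset correspondence). This economy is what drives the paper into the two-orbit base case Lemma~\ref{lem:2orb}, the exceptional construction Lemma~\ref{lem:z32z32}, and the special handling when every orbit has a unique adjacent partner --- but it buys the sharp bound $A\in GR(4)$. Your direct construction (place $\Gamma_i$ with $\Aut(\Gamma_i)=A_i^+$ on each orbit, colour cross-pairs by their full $A$-orbital, disjoint palettes throughout) is correct and in fact cleaner than you fear: once $\rho|_{X_i}\in A_i^+$ for all $i$, your $x^{-2}\in A_i^j$ criterion shows that $\rho|_{X_i}\in A_i\alpha_i$ is incompatible with the cross-orbitals to \emph{any} adjacent $X_k$ regardless of whether $\rho|_{X_k}$ involves $\alpha_k$, so $\rho|_{X_i}\in A_i$ everywhere, whence the full cross-orbital colouring pins $\rho$ down pairwise and condition~(1) finishes. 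No exceptional-group case analysis is needed --- Lemma~\ref{lem:Aalpha} already delivers $A_i^+\in GR(4)$ in all cases, and Lemma~\ref{lem:z32z32} (a two-orbit construction) is irrelevant to your per-orbit building blocks. The ``crux'' you flag largely dissolves once you drop the $4$-colour target; the price is that your disjoint palettes prove only $A\in GR$, not the paper's Corollary that $A\in GR(4)$.
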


Note that the factor group $A_i/A_i^*$  is an elementary abelian $2$-group (as an abstract group) if and only if it is isomorphic to $Z_2^n$ for some $n\geq 0$. In turn, the permutation group $A_i$ is an elementary abelian $2$-group if and only if it is permutation isomorphic to the regular action of $Z_2^n$ for some $n\geq 0$. Note that this includes trivial cases with $n=0$. (There are also other permutation groups that are elementary abelian $2$-groups, but they are not transitive).

The proof of Theorem~\ref{th:main} consists of a number of lemmas. We keep the notation of the previous section. First we prove the ``only if'' part of the theorem. 

\begin{Lemma}\label{lem:oifpart}
If an abelian permutation group $A\in GR$, then $A$ satisfies the conditions $(1)$ and $(2)$ of the Theorem~5.1.
\end{Lemma}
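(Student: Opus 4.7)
The plan is to handle the two conditions in turn, with condition~$(2)$ carrying all the content. For $(1)$, I would observe directly that if $\sigma$ is $2$-orbit-compatible with $A=\Aut(\Gamma)$, then on each set $X_i\cup X_j$ the permutation $\sigma$ agrees with some element of $A$, so it preserves every colour of $\Gamma$ (since every edge lies in some $X_i\cup X_j$) and hence lies in $\Aut(\Gamma)=A$. This is essentially the remark already made in Section~4 right after the definition of $2$-orbit closure.

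For $(2)$, I fix an orbit $X_i$ with $A_i/A_i^*$ an elementary abelian $2$-group and try to prove that $A_i$ itself is elementary abelian $2$-group. My first step is to note that $A_i^*\subseteq A_i^j$ for every $j\neq i$, because any permutation of $A$ fixing $X\setminus X_i$ pointwise certainly fixes $X_j$ pointwise. Hence each $A_i/A_i^j$ is a quotient of the elementary abelian $2$-group $A_i/A_i^*$ and is therefore elementary abelian $2$-group as well; that is, $X_i$ is isolated.

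The central step is then to extend the involution $\alpha_i\colon x\mapsto x^{-1}$ of the regular abelian group $A_i$ to a permutation $\widetilde\alpha_i$ of $X$ by fixing $X\setminus X_i$ pointwise, and to show $\widetilde\alpha_i\in\Aut(\Gamma)=A$. Colour-preservation inside $X_i$ follows from Lemma~\ref{lem:alpha} applied to the induced subgraph $\Gamma|_{X_i}$, whose automorphism group contains the regular abelian group $A_i$. Colour-preservation on edges disjoint from $X_i$ is trivial since $\widetilde\alpha_i$ acts as identity there. The main obstacle is colour-preservation for edges between $X_i$ and some $X_j$, $j\neq i$: here I would identify $X_i$ with $A_i$ and observe that, once a base point $y_0\in X_j$ is fixed, the orbitals of $A$ on $X_i\times X_j$ correspond bijectively to the cosets of $A_i^j$ in $A_i$, and $\widetilde\alpha_i$ sends the orbital labelled by $aA_i^j$ to the one labelled by $a^{-1}A_i^j$. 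Since $A_i/A_i^j$ has exponent $\leq 2$, these cosets coincide, so $\widetilde\alpha_i$ fixes each orbital setwise; as colour classes of $\Gamma$ are unions of orbitals of $\Aut(\Gamma)=A$, every colour is preserved.

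Finally, from $\widetilde\alpha_i\in A$ and $A|_{X_i}=A_i$ I conclude $\alpha_i\in A_i$. Because $A_i$ acts regularly on $X_i=A_i$ by left multiplication, $\alpha_i$ must coincide with left multiplication by a single element $c\in A_i$; evaluating at the identity forces $c=e$, and therefore $x^{-1}=x$ throughout $A_i$, which is precisely condition~$(2)$. The whole argument rests on the orbital-preservation step in the previous paragraph; everything else is a direct unpacking of definitions together with Lemmas~\ref{lem:alpha} and~\ref{lem:isol}.
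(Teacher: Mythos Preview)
Your proposal is correct and follows essentially the same route as the paper's proof: extend the involution $\alpha_i$ on $X_i$ by the identity on $X\setminus X_i$, show it preserves all colours of $\Gamma$, and conclude by regularity of $A_i$ that $\alpha_i$ is trivial. The only organisational difference is that the paper works directly with cosets of $A_i^*$ (using that $x$ and $x^{-1}$ lie in the same $A_i^*$-coset to see that $xz$ and $x^{-1}z$ have the same colour for every $z\notin X_i$ at once), whereas you pass first to each $A_i/A_i^j$ and phrase the same step in terms of orbitals on $X_i\times X_j$; this is the same argument at a slightly different level of granularity. One minor remark: your closing sentence cites Lemma~\ref{lem:isol}, but you do not actually use it---you only need the trivial inclusion $A_i^*\subseteq A_i^j$, which you prove directly.
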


\begin{proof}
As we have already noted before Lemma~\ref{lem:2equal}, the condition (1)  obviously holds. For (2), let $\Gamma$ be a coloured graph with $\Aut(\Gamma)= A$, and suppose that $A_i/A_i^*$ is isomorphic to $Z_2^m$ for some $m\geq 0$. Since $A_i$ is abelian and transitive on $X_i$, it acts regularly on $X_i$. Therefore $X_i$ may be identified with $A_i$, and the action of $A_i$ with the regular action on itself. In particular, $A_i^*$ may be considered as a subset of $X_i$.

For each pair of elements $x,y\in A_i^*$, there is a permutation $\sigma\in A$  moving $x$ into $y$ and fixing all the elements outside $X_i$. Because of commutativity, $\sigma$ does the same with any pair $tx$ and $ty$, where $t\in A_i$ is treated as a permutation on $A_i$. It follows that
the cosets of $A_i/A_i^*$ have the same property:  for each pair of elements $x,y$ in the same coset, there is a permutation $\sigma\in A$  moving $x$ into $y$ and fixing all the elements outside $X_i$.  It follows that for every pair of such elements $x,y$, and every element $z\notin X_i$, the edges $xz$ and $yz$ in $\Gamma$ have the same colour.

We observe that for each $x\in X_i$, $x^{-1}$ is in the same coset as $x$.
Indeed, since $A_i/A_i^* \cong Z_2^m$, for cosets we have $xA_i^*xA_i^*= A_i^*$, and by commutativity, $x^2A_i^*= A_i^*$; hence $xA_i^* = x^{-1}A_i^*$, as required. Thus, we infer that the edges $xz$ and $x^{-1}z$ in $\Gamma$ have the same colour, for every element $z\notin X_i$. 

We proceed to show the the involution $\alpha$ in $X_i=A_i$ treated as a permutation of $X$ (fixing all elements $x\notin X_i$) preserves the colours of edges in $\Gamma$. Indeed, by what established above, it preserves the colours of all edges in $\Gamma$ that have at most one end in $X_i$. On the other hand, by Lemma~\ref{lem:alpha} we know that $\alpha$ preserves the colours
of the edges within $X_i$, which proves the claim.

Consequently, $\alpha \in A_i$. Since $A_i$ is regular, it means that $\alpha$ must be trivial, that is, $x=x^{-1}$ for all $x \in A_i$. It follows that $A_i$ is an elementary abelian $2$-group, proving the lemma.
\end{proof}

The proof of the ``if'' part is by induction on the number of orbits in $A$. Below, we establish the result for two orbits. Note that in this case $A$ is trivially $2$-orbit-closed.

\begin{Lemma}
\label{lem:2orb}
If $A$ is a nontrivial abelian permutation group with two orbits and $A$ satisfies condition $(2)$ of the theorem,  then $A\in GR(4)$. 
\end{Lemma}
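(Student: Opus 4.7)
The plan is to use the subdirect sum theorem of Section~\ref{subd}, which under the two-orbit hypothesis gives
$$
A = A_1[H_1]\oplus_\phi A_2[H_2],
$$
where $H_i=A_i^*$ is the pointwise stabiliser of the other orbit, and $F:=A_1/H_1\cong A_2/H_2$ is the common abelian factor group. The constituents $A_i$ are abelian and transitive, hence regular on $X_i$, so I may identify $X_i$ with $A_i$ via a fixed basepoint. Under this identification, the orbits of $A$ on $X_1\times X_2$ are the $|F|$ sets $O_r=\{(a,b)\in A_1\times A_2 : \phi(aH_1)=r\cdot bH_2\}$, indexed by $r\in F$.

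I build $\Gamma$ as follows. On each orbit $X_i$ I place a coloured graph $\Gamma_i=\Cay^*(A_i;\Pi_i)$ with $\Aut(\Gamma_i)=A_i^+$ and at most four colours, provided by Lemma~\ref{lem:z2n} when $A_i=Z_2^{n_i}$ (so $A_i^+=A_i$) and by Lemma~\ref{lem:Aalpha} otherwise. Between $X_1$ and $X_2$ I colour with a reserved colour $c$ exactly the unordered pairs $\{x,y\}$, $x\in X_1$, $y\in X_2$, for which the ordered pair $(x,y)$ lies in $O_r$; here $r\in F$ is chosen with $r^2\neq 1_F$ whenever $F$ is not elementary abelian of exponent $2$ (such $r$ exists in that case), and $r$ is arbitrary otherwise. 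All remaining between-edges receive colour $0$. In the parallel-sum situation $A_1=A_2$ with trivial $H_i$, I would additionally take $\Pi_1\neq\Pi_2$ so that the colour-degree tuples in $X_1$ differ from those in $X_2$.

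The inclusion $A\subseteq\Aut(\Gamma)$ is immediate: each $(\sigma,\tau)\in A$ acts on $\Gamma_i$ through $A_i\leq A_i^+$ and preserves $O_r$ setwise by left translation. For the reverse inclusion, the distinct colour-degree tuples force every $\pi\in\Aut(\Gamma)$ to preserve the orbit partition, so $\pi=(\sigma,\tau)$ with $\sigma\in A_1^+$ and $\tau\in A_2^+$, and preservation of the colour-$c$ between-edges is equivalent to stability of $O_r$ under $(\sigma,\tau)$. A direct computation using $\phi$ shows that stability of $O_r$ either forces $(\sigma,\tau)\in A$ (when $\sigma\in A_1$ and $\tau\in A_2$, because $A$ is a subgroup of $A_1\times A_2$), or else forces both $F$ to be elementary abelian of exponent $2$ and $r^2=1_F$ (when an involution $\alpha_i$ is involved). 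By the choice of $r$, the second alternative can arise only when $F$ is already elementary abelian of exponent $2$; condition~(2) then forces $A_1$ and $A_2$ to be elementary abelian of exponent $2$, so every $\alpha_i$ is trivial, contradicting the involvement of an involution. Hence $\pi\in A$.

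The main obstacle is the parallel-sum case with $F$ elementary abelian of exponent $2$: there the choice of $r$ cannot rule out the swap $X_1\leftrightarrow X_2$ as a spurious automorphism, since $r^2=1_F$ holds automatically. The asymmetric choice $\Pi_1\neq\Pi_2$ addresses this by making $\Gamma_1$ and $\Gamma_2$ non-isomorphic as coloured graphs; for the few very small groups where room within four colours is tightest, the direct constructions of Lemma~\ref{lem:z234} and Lemma~\ref{lem:z32z32} (together with analogous ad hoc graphs) complete the argument.
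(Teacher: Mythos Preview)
Your overall architecture matches the paper's, but the ``direct computation'' in the reverse inclusion has a genuine gap: you have not handled the case where \emph{both} components involve the involution. Write $\sigma=g\alpha_1$ and $\tau=h\alpha_2$ with $g\in A_1$, $h\in A_2$. For $(a,b)\in O_r$ one has $\phi(aH_1)=r\,bH_2$, and then
\[
\phi\bigl(ga^{-1}H_1\bigr)=\phi(gH_1)\,r^{-1}(bH_2)^{-1},
\qquad
r\,(hb^{-1})H_2=r\,(hH_2)(bH_2)^{-1},
\]
so $(g\alpha_1,h\alpha_2)$ preserves $O_r$ if and only if $\phi(gH_1)=r^{2}(hH_2)$. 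This is a single equation in $F$ and always has solutions, irrespective of the order of $r$. In particular, taking $g=1$ and any $h$ with $hH_2=r^{-2}$ gives an automorphism $(\alpha_1,h\alpha_2)$ of your $\Gamma$ which lies outside $A$ whenever $A_1$ is not an elementary abelian $2$-group (since then $\alpha_1\notin A_1$). Your dichotomy ``either $(\sigma,\tau)\in A$, or $F$ has exponent $2$ and $r^2=1$'' is therefore false: it covers only the mixed case where exactly one of $\sigma,\tau$ involves an involution.

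This is precisely why the paper colours \emph{two} between-orbitals rather than one. It first joins corresponding cosets ``in parallel'' with colour~$1$ (the orbital $O_{1_F}$), and then, in the adjacent case, adds colour~$2$ on a shifted orbital $O_s$ with $s^2\neq 1_F$. The coupled involution $(g\alpha_1,h\alpha_2)$ preserves $O_{1_F}$ iff $\phi(gH_1)=hH_2$, and preserves $O_s$ iff $\phi(gH_1)=s^{2}(hH_2)$; together these force $s^{2}=1_F$, a contradiction. A single between-colour cannot achieve this, so your construction must be amended to use two between-orbitals as in the paper (the exceptional $Z_3^2$ situations and the problem of distinguishing $X_1$ from $X_2$ are then handled via Lemma~\ref{lem:z32z32} and a colour-degree argument, as you indicate).
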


\begin{proof} 

Let $O$ and $Q$ be the orbits of $A$. 
Let $A = B[B'] \oplus_\phi C[C']$ be the subdirect decomposition of $A$ with regard to $O$ and $Q$. 
Let $O_1, \ldots, O_r$ be the partition of the orbit $O$  into orbits of $B'$. Since the actions of $B$ on $O$ is regular (as $B$ is abelian and transitive), the factor group $B/B'$ acts on the set of orbits $O_1, \ldots, O_r$  in a regular way. The same is true of the action of $C/C'$ on the set of orbits $Q_1,\ldots, Q_s$ of $C'$ on $Q$, and since $B/B'$ and $C/C'$ are isomorphic, $r=s$. It follows also that $\phi : B/B' \to C/C'$ establishes a one to one correspondence between the orbits $O_1, \ldots, O_r$ and $Q_1,\ldots, Q_r$ so that the action of $B/B'$ on $O_1, \ldots, O_r$ is equivalent to the action of $C/C'$ on $Q_1,\ldots, Q_r$. After suitable renumbering we may assume that $\phi(O_i) = Q_i$ for all $i=1,\ldots,r$. Moreover, we may assume that the orbits $O_i$ are identified with the cosets of $B/B'$, the orbits $Q_i$ are identified with the cosets of $C/C'$, and $B' = O_1$ and $C' = Q_1$. 

We construct a 4-coloured graph on the set $O\cup Q$ of vertices. It consists of two parts: $\Gamma_1$ on the set of vertices $O$, and $\Gamma_2$ on the set of vertices $Q$. We take $\Gamma_i$ to be a $4$-coloured graph given by Lemma~\ref{lem:Aalpha}, such that $\Aut(\Gamma_1) = \langle B,\beta \rangle$ and $\Aut(\Gamma_2) = \langle C,\gamma \rangle$, where $\beta$ and $\gamma$ are corresponding involutions.

For the edges joining $O$ and $Q$ we put colours as follows. First, for each $i=1,\ldots,r$, and for all $y\in O_i$ and all $z\in Q_i$, the edge $yz$ is coloured $1$. These edges reflect the one-to-one correspondence between cosets. They guarantee that the regular action on cosets is \emph{parallel}: if $(\sigma, \tau)$ is a permutation on $O\cup Q$ preserving the set of these edges (where $\sigma$ permutes $O$, and $\tau$ permutes $Q$), then $\sigma(O_i)= O_j$ implies $\tau(Q_i)= Q_j$ for all $i,j \leq r$. Therefore, in the remaining part of the construction we assume that these edges are the only edges between $O$ and $Q$ coloured $1$. Note that this guarantees also that, if $\Aut(\Gamma) \subseteq B \oplus C$, than  $\Aut(\Gamma) = B[B'] \oplus_\phi C[C'] = A$. So, it remains only to prove that $\Aut(\Gamma) \subseteq B \oplus C$. (This construction will be referred further as \emph{joining cosets in parallel manner}).

Now the construction differs depending on whether the orbits $O$ and $Q$ are adjacent or not. First we consider the case of adjacent orbits, and define the set of edges between $O$ and $Q$ coloured $2$. They are chosen to prevent involutions in $O$ and $Q$. 

Since $B/B'$ is not isomorphic to $Z_2^n$, it has an element $xB'$ of order greater than $2$. If we would have $xB' = x^{-1}B'$, then $x^2B' = B'$, a contradiction. Hence $x$ and $x^{-1}$ lie in different cosets. 
We colour all the edges between $B$ and $\phi(xB)$ with the colour $2$. Moreover, to make sure that $\Aut(\Gamma) \supseteq A$, we put colour $2$ for all edges between $yB$ and $\phi(yxB)$ for any $y\in B$. The remaining edges between $X$ and $O$ are coloured $0$.  Thus, since $x^{-1}B \neq xB$, the edges between  $B$ and $\phi(x^{-1}B)$ have colour $0$, while the edges between  $B$ and $\phi(xB)$ have colour $2$. This ensures that the involution $\gamma$ does not preserve colours of the edges, and similarly, $\beta$ does not, either.

\begin{figure}
  \psset{unit=2cm, radius=0.1cm} 

\begin{center}
\begin{pspicture}(3.5,3.8)(1.4,0)

\Cnode(4,0.1){v11}  
\Cnode(4,0.3){v12}  
\Cnode(4,0.5){v13} 

\Cnode(4,1.1){v21}  
\Cnode(4,1.3){v22}  
\Cnode(4,1.5){v23} 

\Cnode(4,2.1){v31}  
\Cnode(4,2.3){v32}  
\Cnode(4,2.5){v33} 

\Cnode(4,3.1){v41}  
\Cnode(4,3.3){v42}  
\Cnode(4,3.5){v43}





\ncarc[arcangle=30]{v11}{v21}
\ncarc[arcangle=30]{v21}{v31}
\ncarc[arcangle=30]{v31}{v41}
\ncarc[arcangle=40]{v41}{v12}

\ncarc[arcangle=-30]{v12}{v22}
\ncarc[arcangle=-30]{v22}{v32}
\ncarc[arcangle=-30]{v32}{v42}
\ncarc[arcangle=40]{v42}{v13}

\ncarc[arcangle=30]{v13}{v23}
\ncarc[arcangle=30]{v23}{v33}
\ncarc[arcangle=30]{v33}{v43}
\ncarc[arcangle=60]{v43}{v11}

\Cnode(1,0.15){w11}  
\Cnode(1,0.45){w12}  
 
\Cnode(1,1.15){w21}  
\Cnode(1,1.45){w22}  
 
\Cnode(1,2.15){w31}  
\Cnode(1,2.45){w32}  

\Cnode(1,3.15){w41}  
\Cnode(1,3.45){w42}

\rput(0.85,0.1){0}  
\rput(1,0.62){4} 

\rput(0.85,1.15){1}  
\rput(0.95,1.62){5}  
 
\rput(0.85,2.15){2}  
\rput(0.95,2.62){6}  

\rput(0.85,3.15){3}  
\rput(0.95,3.62){7}

\ncline{v11}{w11}
\ncline{v11}{w12}
\ncline{v12}{w11}
\ncline{v12}{w12}
\ncline{v13}{w11}
\ncline{v13}{w12}

\ncline{v21}{w21}
\ncline{v21}{w22}
\ncline{v22}{w21}
\ncline{v22}{w22}
\ncline{v23}{w21}
\ncline{v23}{w22}

\ncline{v31}{w31}
\ncline{v31}{w32}
\ncline{v32}{w31}
\ncline{v32}{w32}
\ncline{v33}{w31}
\ncline{v33}{w32}

\ncline{v41}{w41}
\ncline{v41}{w42}
\ncline{v42}{w41}
\ncline{v42}{w42}
\ncline{v43}{w41}
\ncline{v43}{w42}

 \psset{linewidth=0.021}
\ncarc[arcangle=40,linestyle=dotted]{w11}{w21}
\ncarc[arcangle=40,linestyle=dotted]{w21}{w31}
\ncarc[arcangle=40,linestyle=dotted]{w31}{w41}
\ncarc[arcangle=-40,linestyle=dotted]{w41}{w12}

\ncarc[arcangle=-40,linestyle=dotted]{w12}{w22}
\ncarc[arcangle=-40,linestyle=dotted]{w22}{w32}
\ncarc[arcangle=-40,linestyle=dotted]{w32}{w42}
\ncarc[arcangle=-60,linestyle=dotted]{w42}{w11}

\ncline[linestyle=dotted]{v11}{w41}
\ncline[linestyle=dotted]{v11}{w42}
\ncline[linestyle=dotted]{v12}{w41}
\ncline[linestyle=dotted]{v12}{w42}
\ncline[linestyle=dotted]{v13}{w41}
\ncline[linestyle=dotted]{v13}{w42}

\ncline[linestyle=dotted]{v21}{w11}
\ncline[linestyle=dotted]{v21}{w12}
\ncline[linestyle=dotted]{v22}{w11}
\ncline[linestyle=dotted]{v22}{w12}
\ncline[linestyle=dotted]{v23}{w11}
\ncline[linestyle=dotted]{v23}{w12}

\ncline[linestyle=dotted]{v31}{w21}
\ncline[linestyle=dotted]{v31}{w22}
\ncline[linestyle=dotted]{v32}{w21}
\ncline[linestyle=dotted]{v32}{w22}
\ncline[linestyle=dotted]{v33}{w21}
\ncline[linestyle=dotted]{v33}{w22}

\ncline[linestyle=dotted]{v41}{w31}
\ncline[linestyle=dotted]{v41}{w32}
\ncline[linestyle=dotted]{v42}{w31}
\ncline[linestyle=dotted]{v42}{w32}
\ncline[linestyle=dotted]{v43}{w31}
\ncline[linestyle=dotted]{v43}{w32}

\end{pspicture}
\end{center} \caption{Graph $\Gamma$ with $\Aut(\Gamma) = Z_{8}[Z_2]\oplus_\phi Z_{12}[Z_3]$, consisting of two cycles representing $Z_{8}$ and $Z_{12}$, and edges joining the cycles.  }\label{fig4}
\end{figure}
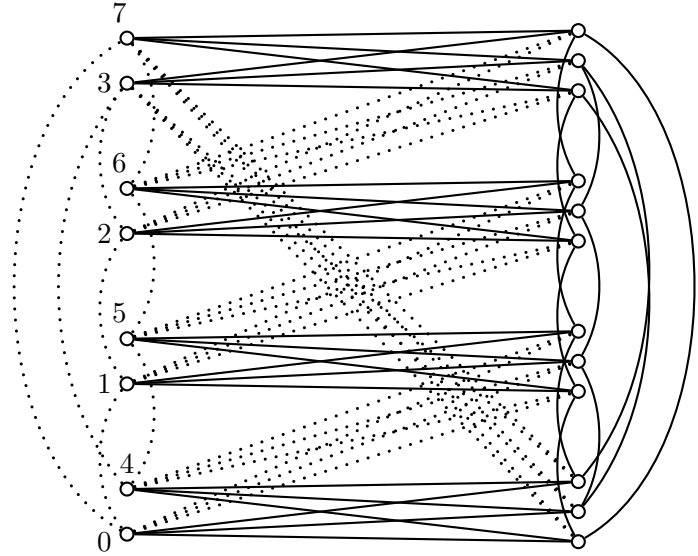

(Figure~\ref{fig4} illustrates the case for $B=Z_8, B'=Z_2, C=Z_{12}$, and $C'=Z_3$; solid lines correspond to colour $1$, while dotted lines correspond to colour $2$. The cycles representing $(Z_8)^+$ (on the left, in colour $2$), and $(Z_{12})^+$ (on the right, in colour $1$) are drawn in a way grouping vertices corresponding to cosets; this is to make the picture more readable).

To prove that $\Aut(\Gamma) \subseteq B \oplus C$, it remains to show that $O$ is a fixed block of $\Aut(\Gamma)$. This may be achieved by suitable rearrangement of colours of edges in $\Gamma_1$, so that the quadruple of colour degrees of vertices in $O$ is different than that in $Q$ (Note, that because $\Aut(\Gamma)$ is transitive on its orbits, this quadruple is the same for all vertices in the given orbit). Such a rearrangement is impossible only in one case, when both graphs $\Gamma_i$ are isomorphic, all $4$ colours are used, and degree in each colour is the same. It follows that, according to Lemma~\ref{lem:Aalpha}, this happens only in the case when $B$ and $C$ are isomorphic with $Z_3^2$. Moreover, it follows that, in such a case,  
$A$ is a group of the form $Z_{3}^2[H]\oplus_\phi Z_{3}^2[H]$, and since the orbits are adjacent, $H = Z_3$ or $H=I_9$. Yet, by Lemma~\ref{lem:z32z32}, $A\in GR(4)$ in such a case, which completes the proof for the adjacent orbits.

Now, assume that $O$ and $Q$ are not adjacent. Then, by the condition (2), both $B$ and $C$ are elementary abelian $2$-groups of the form $Z_2^m$. Moreover, since $A$ is nontrivial, at least for one of these group $m>0$. In this case, the involutions are trivial, so we do not need any special construction to prevent them. Whence, in this case, all the edges between $X$ and $O$ other than the edges guaranteeing parallel action between the cosets are coloured $0$. Since no group of the form $Z_2^m$, $m>0$, has $4n+1$ elements for any $n$, as in the previous case, the colours of edges in $\Gamma_i$ can be rearranged so that to ensure that $O$ is fixed block of $\Aut(\Gamma)$. Then the result follows as before, completing the proof.
\end{proof}

\emph{Remark 1.} For future reference note that the edges between the orbits are coloured in at most there colours $0,1,2$. This includes the case covered by Lemma~\ref{lem:z32z32}.

\emph{Remark 2.} The assumption that $A$ is nontrivial is only to exclude the exceptional case of the trivial permutation group acting on exactly 2 elements, which (because of lack of room) is not representable by any 2-element graph.

Now we prove the ``if'' part.

\begin{Lemma}\label{lem:ifpart}
If $A$ is a nontrivial abelian permutation group satisfying condition $(1)$ and $(2)$ of the theorem, then $A\in GR(4)$. 
\end{Lemma}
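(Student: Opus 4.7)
The approach is to proceed by induction on the number $r$ of orbits of $A$.

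For the base case $r=1$, the group $A$ is transitive abelian, hence regular, and $A_1^* = A_1$, so that $A_1/A_1^*$ is the trivial group (an elementary abelian $2$-group). Condition (2) then forces $A \cong Z_2^n$, and by Lemma \ref{lem:z2n}, $A\in GR(4)$. The case $r=2$ is precisely Lemma \ref{lem:2orb}.

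For the inductive step $r\geq 3$, I would construct a single 4-coloured graph $\Gamma$ on $X$ by the following uniform recipe. On each orbit $X_i$, identified with $A_i$ acting regularly on itself, take the 4-coloured graph $\Gamma_i$ supplied by Lemma \ref{lem:Aalpha}, for which $\Aut(\Gamma_i) = A_i^+ = \langle A_i,\alpha_i\rangle$. For every pair $(X_i,X_j)$, using the subdirect decomposition $A|_{X_i\cup X_j} = A_i[A_i^j]\oplus_{\phi_{ij}} A_j[A_j^i]$, insert the same inter-orbit edges as in the proof of Lemma \ref{lem:2orb}: colour-$1$ edges between all vertices in cosets paired by $\phi_{ij}$, and, when the pair is adjacent, the auxiliary colour-$2$ edges arranged to kill the involutions $\alpha_i,\alpha_j$ on that pair. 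Remaining edges get colour $0$, so the palette is $\{0,1,2,3\}$.

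To check $\Aut(\Gamma)=A$, the inclusion $A\subseteq\Aut(\Gamma)$ is immediate. Conversely, assume for now that every $\sigma\in\Aut(\Gamma)$ preserves each orbit (the main obstacle, discussed below). Then $\sigma|_{X_i}\in\Aut(\Gamma_i)=A_i^+$, so $\sigma|_{X_i}=a_i\alpha_i^{\epsilon_i}$. If $X_i$ is isolated, Lemma \ref{lem:isol} combined with condition (2) makes $A_i$ an elementary abelian $2$-group, so $\alpha_i$ is trivial and $\sigma|_{X_i}=a_i\in A_i$. If $X_i$ is non-isolated, choose an orbit $X_k$ adjacent to $X_i$; the restriction $\Gamma|_{X_i\cup X_k}$ is exactly the graph built in the adjacent case of Lemma \ref{lem:2orb}, and that lemma gives $\sigma|_{X_i\cup X_k}\in A|_{X_i\cup X_k}$, hence again $\sigma|_{X_i}\in A_i$. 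Thus $\sigma|_{X_i}\in A_i$ for every $i$. Now, for each pair $(X_i,X_j)$, the colour-$1$ coset-matching edges force the induced actions of $\sigma|_{X_i}$ and $\sigma|_{X_j}$ on cosets to correspond under $\phi_{ij}$; equivalently $\phi_{ij}(\sigma|_{X_i}A_i^j)=\sigma|_{X_j}A_j^i$, which is precisely the defining condition of $A|_{X_i\cup X_j}$. Hence $\sigma$ is $2$-orbit-compatible with $A$, and by condition (1) we conclude $\sigma\in A$.

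The main obstacle is the pending orbit-preservation claim for $\Aut(\Gamma)$. I would treat it in the same spirit as the end of the proof of Lemma \ref{lem:2orb}: exploit the slack in choosing each $\Gamma_i$ (Lemma \ref{lem:Aalpha} typically admits more than one acceptable graph for a given $A_i$) to rearrange intra-orbit colours so that the quadruples of colour-degrees of different orbits become pairwise distinct. Orbits of distinct sizes are already separated by total degree; orbits of equal size are separated by intra-orbit rearrangement as in the two-orbit argument. The residual exceptional configurations -- pairs of isomorphic $Z_3^2$-orbits on which Lemma \ref{lem:Aalpha} leaves no slack, and pairs of singleton orbits which cannot be distinguished from within the pair itself -- are resolved respectively by substituting the Lemma \ref{lem:z32z32} construction on the relevant two-orbit restriction and by routing distinguishing inter-orbit connections through a third orbit. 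By Remark 1 after Lemma \ref{lem:2orb} these modifications remain within the four-colour budget, so $A\in GR(4)$, completing the proof.
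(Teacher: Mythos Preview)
Your argument for $r\geq 3$ is a direct construction, not an inductive one---you never invoke the hypothesis for fewer orbits---and the step you flag as ``the main obstacle'' is a genuine gap. The proposed fix (rearranging intra-orbit colours to separate colour-degree quadruples, invoking Lemma~\ref{lem:z32z32} for $Z_3^2$-pairs, ``routing through a third orbit'' for singletons) does not scale beyond two orbits of a given type. Take $A$ to be the parallel sum of three copies of $Z_3^2$: the paper notes after Lemma~\ref{lem:Aalpha} that the $4$-coloured graph representing $(Z_3^2)^+$ is \emph{unique}, so there is no slack whatsoever and $\Gamma_1=\Gamma_2=\Gamma_3$; Lemma~\ref{lem:z32z32} covers only a \emph{pair} of $Z_3^2$-orbits, not three. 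With your uniform recipe the resulting $\Gamma$ admits a cyclic permutation of the three orbits as an automorphism, which lies outside $A$. The same obstruction arises with arbitrarily many fixed points or many copies of any fixed $A_i$: four colours give only boundedly many degree profiles, and local degree-counting cannot separate unboundedly many identical orbits.

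The paper sidesteps this by a genuinely recursive construction. For $r>2$ it writes $A=A_i[A_i^*]\oplus_\phi B[B']$, passes to the $2$-orbit closure $\clo{B}$ of the restriction to $X\setminus X_i$, and applies the inductive hypothesis to $\clo{B}$ to obtain $\Gamma_2$ with \emph{intransitive} automorphism group (since $r-1\geq 2$). On $X_i$ it places $\Gamma_1$ with transitive automorphism group $(A_i)^+$. After arranging both pieces to be connected in colours $2$ and $3$, this transitive-versus-intransitive asymmetry alone forces $X_i$ to be a fixed block of $\Aut(\Gamma)$; no colour-degree bookkeeping across many orbits is ever needed. The residual work is a short case analysis choosing $X_i$ so that $\clo{B}$ still satisfies condition~(2), and, in the extreme case where the orbits are matched into adjacent pairs, peeling off two orbits at once rather than one.
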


\begin{proof}
The proof is by induction on the number of orbits $r$ of $A$. If $r=1$, $A$ is transitive, and condition (2) means that $A$ is an elementary abelian $2$-group, since in this case $A_i^* = A$. By Lemma~\ref{lem:z2n} $A\in GR(4)$. If $A$ has $2$ orbits, then the results holds by Lemma~\ref{lem:2orb}.

Now, suppose that $A$ has $r>2$ orbits, and the result holds for all groups with the number of orbits less than $r$. 

Consider an arbitrary orbit $X_i$ and the decomposition of $A$ with regard to this orbit, that is, let $A = A_i[A_i^*] \oplus_\phi  B[B']$, where $B$ is the restriction of $A$ to $X\setminus X_i$. Since $A$ is nontrivial, we may assume in addition that $B$ is nontrivial ($A_i$ may happen to be a fixed point). 
Let $\clo{B}$ be the $2$-orbit closure of $B$. By Lemma~\ref{lem:clo},  $\clo{B}$ is abelian, and it has $r-1$ orbits. Moreover, it satisfies condition~(2), providing all isolated orbits in $\clo{B}$ are those isolated in $A$. Let us continue under this additional assumption.

Then, by the induction hypothesis, $\clo{B}\in GR(4)$, and there exists a $4$-coloured graph $\Gamma_2$ on the set of vertices $X\setminus X_i$ representing $\clo{B}$.  We construct a graph $\Gamma$ on $X$ representing $A$. Let $\Gamma_1$ be a $4$-coloured graph on $X_i$ representing $(A_i)^+$ (by Lemma~\ref{lem:Aalpha}). We may assume that both the graphs $\Gamma_1$ and $\Gamma_2$ are connected in colours $2$ and $3$ (meaning every two vertices in $\Gamma_i$ are connected by a path using only edges of colour~2 or 3; this may be achieved by a suitable change of colours). For each orbit $X_j$ of $\clo{B}$ (that are exactly the orbits of $A$ other than $X_i$), we put the edges coloured $1$ between $X_i$ and $X_j$ joining corresponding cosets in parallel manner as in the proof of Lemma~\ref{lem:2orb}. The remaining edges joining the vertices of $X_i$ and $X\setminus X_i$ are coloured $0$. Obviously, $\Aut(\Gamma) \supseteq A$, and since $\clo{B}$ is intransitive, each automorphism of $\Gamma$ preserves the orbit $X_i$. Thus,  
$\Aut(\Gamma) \subseteq (A_i)^+ \oplus \clo{B}$. 

We prove that no nontrivial involution on $X_i$ is admitted, that is, $\Aut(\Gamma)$ $\subseteq A_i \oplus \clo{B})$. Indeed, if
$A_i/A_i^*$ is an elementary abelian $2$-group, then by (2), so is $A_i$, and the involution is trivial. Then $(A_i)^+ = A_i$, and the claim is obvious. So, we may assume that $A_i/A_i^*$ is not an elementary abelian $2$-group. Then by Lemma~\ref{lem:isol}, $X_i$ is not isolated, which means that there is an orbit $X_j$, $j\neq i$, adjacent to $X_i$. In particular, $A_i/A_i^j$ is not an elementary abelian $2$-group. Consider the restriction of $A$ to $X_i\cup X_j$, which can be presented in the form  $A_i[A_i^j] \oplus_\phi A_j[A_j^i]$. Similarly as in 
Lemma~\ref{lem:2orb}, we infer that there is $x\in A_i$ such that $x$ and $x^{-1}$ lie in different cosets of $A_i/A_i^j$. Now, $A_j/A_j^i$ is regular (as it is transitive and abelian, so if an automorphism of $\Gamma$ fixes $A_j^i$, it fixes all the cosets of $A_j/A_j^i$. Because of the edges between $X_i$ and $X_j$ guaranteeing a parallel action on cosets, we infer that if an automorphism of $\Gamma$ fixes $A_i^j$, then it fixes all the cosets in $A_i/A_i^j$. Consequently, there is no automorphism of $\Gamma$ whose restriction to $X_i$ would be the involution. This proves our claim. 

The construction ensures that for any two orbits $X_j$ and $X_k$ of $A$, the  the restriction of $\Aut(\Gamma)$ to $X_j\cup X_k$ is the same as the restriction of $A$ to $X_j\cup X_k$. By Lemma~\ref{lem:2equal}, $\Aut(\Gamma) = A$, as required.    

Thus, we have proved that $A\in GR(4)$, under the conditions $(**)$ \emph{that all isolated orbits in $\clo{B}$ are those isolated in $A$, and that $B$ has a nontrivial orbit}. Consider now the general situation. If there is a trivial orbit (fixed point) in $A$, we may take this orbit as $X_i$ above, and the result follows (because the conditions $(**)$ are satisfied). Otherwise, if there is any isolated orbit in $A$, than we may take it as $X_i$, and again conditions $(**)$ are satisfied, and the result follows. The result also follows in any case when there is an orbit $X_i$ such that $(**)$ are satisfied. All it remains to consider is the situation when $A$ has an even number $r$ of orbits, all nontrivial, and paired in such a way that for every orbit $X_i$ there is a unique orbit $X_j$ in $A$ such that $X_i$ and $X_j$ are adjacent. 

If $r=2$, then the result follows by Lemma~\ref{lem:2orb}. If $r\geq 4$, we take a pair of adjacent orbits $X_i$ and $X_j$, put $Y=X_i\cup X_j$, and $Z=X\setminus Y$, and  decompose  $A$ with regard to $Y$ and $Z$: $A = C[C'] \oplus_\phi  B[B']$. Now, the proof is the same as in the case of decomposing with regard to a chosen orbit $X_i$, with the natural modification for $C$ consisting of two orbits, and using Lemma~\ref{lem:2orb} rather than Lemma~\ref{lem:Aalpha}. This makes the proof simpler, since we may omit the part concerning involutions. 
An additional case is created for $n=4$, since then we need to use a more sophisticated colouring of edges to prevent transposing sets $Y$ and $Z$. In this case both $C$ and $B$ consists each of two (adjacent) orbits, and the problem arises when $\Gamma_1$ and $\Gamma_2$, representing $C$ and $B$, respectively, are isomorphic as coloured graphs. Then we make use of the Remark~1 following the proof of Lemma~\ref{lem:2orb}. According to this remark we may assume that in the graph $\Gamma_1$ representing $C$ the edges between the orbits are in colours $0,1,2$,  while for the graph $\Gamma_2$ representing $B$ the edges between the orbits are in colours $1,2,3$. Then $\Gamma_1$ and $\Gamma_2$ are no longer isomorphic, and the construction works also in this case. This completes the proof of the lemma.
\end{proof}

Now, Theorem~\ref{th:main} follows immediately by Lemmas~\ref{lem:oifpart} and
\ref{lem:ifpart}. In fact, we have proved something more, namely, that in case of abelian permutation groups ``four colours suffices''.

\begin{Corollary}
An abelian group $A$ is the automorphism group of a coloured graph  if and only if it is the automorphism group of a complete graph whose edges are coloured in at most $4$ colours.
\end{Corollary}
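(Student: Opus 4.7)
The plan is to derive the corollary immediately from Theorem~\ref{th:main} together with a close reading of the proof of its ``if'' direction. The backward implication requires no argument: any automorphism group of a complete graph whose edges are coloured in at most $4$ colours is, by definition, the automorphism group of a coloured graph, hence lies in $GR$.

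For the forward implication, I would start from a nontrivial abelian $A \in GR$. By Lemma~\ref{lem:oifpart}, $A$ satisfies conditions (1) and (2) of Theorem~\ref{th:main}. The key observation I would emphasize is that Lemma~\ref{lem:ifpart} does not merely reprove $A \in GR$: its inductive construction explicitly produces an edge-coloured complete graph using at most $4$ colours whose automorphism group equals $A$. All the building blocks it invokes --- Lemma~\ref{lem:z234}, Lemma~\ref{lem:Aalpha}, Lemma~\ref{lem:z32z32}, and Lemma~\ref{lem:2orb} --- are themselves stated with the bound $GR(4)$. The inductive step then only adds colours $0$ and $1$ on the edges between the new orbit $X_i$ and the remaining orbits, together with the colours already used inside $\Gamma_1$ and $\Gamma_2$; no fifth colour is ever introduced. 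Hence $A \in GR(4)$.

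The only genuine loose end is the trivial-group case, which Theorem~\ref{th:main} explicitly excludes. For the one-point trivial group the single-vertex complete graph (vacuously $1$-coloured) works, and for the trivial group on two points Remark~2 following Lemma~\ref{lem:2orb} tells us it is not in $GR$ at all, so the equivalence holds vacuously. Thus the corollary follows in all cases.

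The main obstacle has, in fact, already been faced and resolved in the proof of Lemma~\ref{lem:ifpart}; the sharpening from ``some number of colours'' to ``at most $4$ colours'' comes essentially for free once one notes the colour bookkeeping above. The only subtlety to watch for in writing this up is the rearrangement of colours inside $\Gamma_1$ and $\Gamma_2$ used at the end of Lemma~\ref{lem:2orb} and Lemma~\ref{lem:ifpart} to keep the orbits as fixed blocks and (in the $n=4$ case) to distinguish $\Gamma_1$ from $\Gamma_2$; these manoeuvres permute but do not add colours, so the bound $4$ is preserved throughout.
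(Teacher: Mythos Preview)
Your proposal is correct and follows exactly the paper's own justification: the corollary is stated immediately after Lemma~\ref{lem:ifpart}, and the paper simply remarks that the proof of that lemma already produces a representation in $GR(4)$, so ``four colours suffice''. Your added discussion of the trivial groups actually goes beyond what the paper provides (it gives no separate argument there), though strictly speaking you only cover $I_1$ and $I_2$ and not $I_n$ for $n\ge 3$; this is a minor loose end that the paper itself leaves implicit.
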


We note that, by the remark following Lemma~\ref{lem:z2n}, the bound 4 above is sharp. In fact, using, e.g., the construction of the direct sum with the summand $Z_2^3$ one may obtain an infinite family of abelian permutation groups requiring $4$ colours to be representable by a coloured graph.

\section{Characterization of $2$-closed abelian permutation groups}

We show that the notions ``2-closed'' and ``2-orbit-closed'' coincide. The result is an essential characterization, since checking 2-orbit-closure is computationally more direct and much easier than checking, in general, 2-closure. 

\begin{Theorem}\label{th:main2}
Let $A$ be an abelian permutation group. Then $A$ is the automorphism group of a coloured directed graph if and only if 
$A$ is 2-orbit-closed. 
\end{Theorem}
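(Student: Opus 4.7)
The plan is to prove the equivalent statement that, for abelian $A$, being $2$-orbit-closed coincides with being $2$-closed; since $DGR$ is exactly the class of $2$-closed groups, this yields the theorem. The ``only if'' direction does not require the abelian hypothesis and is immediate: if $A = \Aut(\Gamma)$ and $\sigma$ is $2$-orbit-compatible with $A$, then every edge of $\Gamma$ has both endpoints in some $X_i \cup X_j$, on which $\sigma$ agrees with a group element $\sigma' \in A$ that preserves the colour of this edge; hence $\sigma \in \Aut(\Gamma) = A$.

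For the ``if'' direction I would show $A = \Aut(\Orb(A))$ under the hypothesis. First, any $\sigma \in \Aut(\Orb(A))$ preserves the $A$-orbits on $X$: the loop $(x,x)$ belongs to an $A$-orbital depending only on the orbit of $x$, and loops attached to different orbits receive distinct colours. Next, invoking the $2$-orbit-closure of $A$, it suffices to check that for each pair of orbits $X_i, X_j$ the restriction $\sigma|_{X_i \cup X_j}$ lies in $B := A|_{X_i \cup X_j}$; and since the $A$-orbitals within $(X_i \cup X_j)^2$ coincide with the $B$-orbitals, this reduces the entire problem to proving that every abelian permutation group $B$ with at most two orbits is $2$-closed.

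The one-orbit case is classical: an abelian transitive group is regular, and the $2$-closure of the (left) regular representation of a group is its centraliser in the symmetric group, which agrees with the group itself in the abelian case (since left and right regular representations then coincide). For the two-orbit case I would apply the subdirect sum decomposition $B = B_1[H_1] \oplus_\phi B_2[H_2]$: any $\sigma \in \Aut(\Orb(B))$ restricts, by the one-orbit case, to $\sigma_i \in B_i$ on each orbit, so $\sigma = (\sigma_1,\sigma_2) \in B_1 \oplus B_2$. The key calculation is on the cross orbitals: identifying each $X_i$ with $B_i$ via the regular action, one verifies that two pairs $(x_1, y_1), (x_2, y_2) \in X_1 \times X_2$ lie in the same $B$-orbit if and only if $\phi((x_2 x_1^{-1}) H_1) = (y_2 y_1^{-1}) H_2$; hence $\sigma$ preserves every orbital between $X_1$ and $X_2$ precisely when $\phi(\sigma_1 H_1) = \sigma_2 H_2$, which is the defining condition for $(\sigma_1, \sigma_2) \in B$.

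The main obstacle is this orbital computation in the two-orbit case; the remaining pieces assemble cleanly from orbit preservation, the $2$-orbit-closure hypothesis, and the regular-representation identity. In contrast with Theorem~\ref{th:main}, no analogue of condition~(2) arises: the involution $\alpha$ of Lemma~\ref{lem:alpha} is an automorphism only of undirected Cayley graphs, whereas in the directed setting the orbital through $(x, gx)$ is already distinguished from that through $(x, g^{-1}x)$, so no obstruction from the involution appears.
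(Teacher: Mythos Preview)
Your argument is correct and takes a genuinely different route from the paper's. The paper proves the ``if'' direction constructively, by induction on the number of orbits: for the base case $r=1$ it invokes Babai's DRR theorem (handling the exceptions $Z_2^2,Z_2^3,Z_2^4,Z_3^2$ via Lemmas~\ref{lem:z234} and~\ref{lem:Aalpha}), and for the inductive step it splits off one orbit, applies the hypothesis to the $2$-orbit closure of the rest, and glues the two pieces together by colouring edges between corresponding cosets. This yields not just $A\in DGR$ but the sharper conclusion $A\in DGR(4)$, which is the content of the subsequent corollary. Your approach instead shows directly that $A=\Aut(\Orb(A))$ by reducing, via the $2$-orbit-closure hypothesis, to the statement that every abelian group with at most two orbits is $2$-closed; the one-orbit case you settle by the centraliser identity for regular representations, and the two-orbit case by an explicit computation of the cross orbitals under the subdirect decomposition. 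This is shorter and more algebraic, avoids both the induction and the appeal to the DRR classification, but does not give any bound on the number of colours. Both approaches share the same ``only if'' direction.
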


\begin{proof}
To prove this result we follow the approach in the proof of Theorem~\ref{th:main}. In fact, the proof is simpler because we can start the induction from $r=1$ orbits, and there is no problem of involution and special cases, as in the previous proof. In addition, we may omit the assumption about nontriviality of $A$, since the result holds for trivial groups, as well. (In particular, $I_n\in DGR(2)$ can be represented by the directed graph consisting of one directed path of length $n-1$). As the arguments are similar, we give here only a sketch, referring the reader for details to the 
the previous proof.

The ``only if part'' is immediate by the remark before Lemma~\ref{lem:2equal}. 
We prove the ``if part'' by induction on the number of orbits $r$ of $A$. If $r=1$, $A$ is transitive, and hence regular,
and by the result of Babai \cite{bab1}, every nontrivial regular abelian group $A\in DGR(2)$, except for $A=Z_2^n$ with $n=2,3,4$, and $A= Z_3^2$.
In the first case, by Lemma~\ref{lem:z234}, $Z_2^n \in DGR(4)$ (and again $Z_2^3$ requires $4$ colours; here we have $x=x^{-1}$ and the cases of directed and undirected graphs are the same). In the second case, $Z_3^2\in DGR(4)$ by Lemma~\ref{lem:Aalpha}. In fact, it can be easily seen that $Z_3^2 \in DGR(3)$ (see the right hand side of Figure~\ref{fig2}).


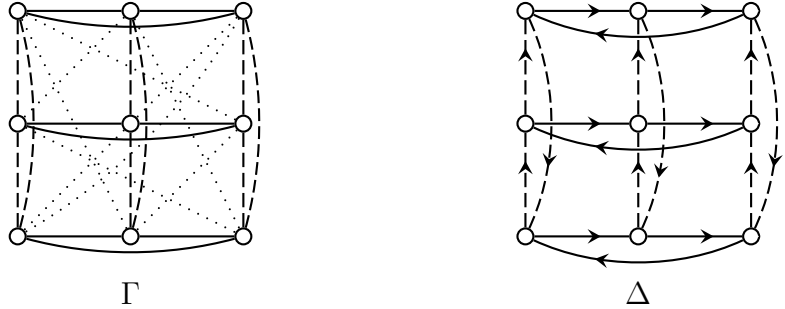
\begin{figure}
  \psset{unit=1.5cm, radius=0.12cm} 

  \begin{center}
\begin{pspicture}(7,3.5)(0,0.3) 


\Cnode(0.2,1){v00}  
\Cnode(0.2,2){v01}  
\Cnode(0.2,3){v02} 

\Cnode(1.2,1){v10}  
\Cnode(1.2,2){v11}  
\Cnode(1.2,3){v12} 

\Cnode(2.2,1){v20}  
\Cnode(2.2,2){v21}  
\Cnode(2.2,3){v22} 

\ncline{v00}{v10}
\ncline{v10}{v20}
\ncarc[arcangle=-15]{v00}{v20}

\ncline{v01}{v11}
\ncline{v11}{v21}
\ncarc[arcangle=-15]{v01}{v21}

\ncline{v02}{v12}
\ncline{v12}{v22}
\ncarc[arcangle=-15]{v02}{v22}

\ncline[linestyle=dashed, dash=5pt 2pt]{v00}{v01}
\ncline[linestyle=dashed, dash=5pt 2pt]{v01}{v02}
\ncarc[linestyle=dashed, dash=5pt 2pt,arcangle=15]{v02}{v00}

\ncline[linestyle=dashed, dash=5pt 2pt]{v10}{v11}
\ncline[linestyle=dashed, dash=5pt 2pt]{v11}{v12}
\ncarc[linestyle=dashed, dash=5pt 2pt,arcangle=15]{v12}{v10}

\ncline[linestyle=dashed, dash=5pt 2pt]{v20}{v21}
\ncline[linestyle=dashed, dash=5pt 2pt]{v21}{v22}
\ncarc[linestyle=dashed, dash=5pt 2pt,arcangle=15]{v22}{v20}

\ncline[linestyle=dotted]{v00}{v11}
\ncline[linestyle=dotted]{v11}{v22}
\ncarc[linestyle=dotted, arcangle=-15]{v00}{v22}

\ncline[linestyle=dotted]{v02}{v21}
\ncline[linestyle=dotted]{v02}{v10}
\ncline[linestyle=dotted]{v10}{v21}

\ncline[linestyle=dotted]{v20}{v12}
\ncline[linestyle=dotted]{v20}{v01}
\ncline[linestyle=dotted]{v01}{v12}

 \psset{arrowscale=1.8,
arrowlength=0.9,
ArrowInsidePos=0.7,ArrowInside=->}

\Cnode(4.7,1){w00}  
\Cnode(4.7,2){w01}  
\Cnode(4.7,3){w02} 

\Cnode(5.7,1){w10}  
\Cnode(5.7,2){w11}  
\Cnode(5.7,3){w12} 

\Cnode(6.7,1){w20}  
\Cnode(6.7,2){w21}  
\Cnode(6.7,3){w22} 

\ncline{w00}{w10}
\ncline{w10}{w20}
\ncarc[arcangle=25]{w20}{w00}

\ncline{w01}{w11}
\ncline{w11}{w21}
\ncarc[arcangle=25]{w21}{w01}

\ncline{w02}{w12}
\ncline{w12}{w22}
\ncarc[arcangle=25]{w22}{w02}

\ncline[linestyle=dashed, dash=5pt 2pt]{w00}{w01}
\ncline[linestyle=dashed, dash=5pt 2pt]{w01}{w02}
\ncarc[linestyle=dashed, dash=5pt 2pt,arcangle=25]{w02}{w00}

\ncline[linestyle=dashed, dash=5pt 2pt]{w10}{w11}
\ncline[linestyle=dashed, dash=5pt 2pt]{w11}{w12}
\ncarc[linestyle=dashed, dash=5pt 2pt,arcangle=25,ArrowInsidePos=0.75]{w12}{w10}

\ncline[linestyle=dashed, dash=5pt 2pt]{w20}{w21}
\ncline[linestyle=dashed, dash=5pt 2pt]{w21}{w22}
\ncarc[linestyle=dashed, dash=5pt 2pt,arcangle=25]{w22}{w20}

\rput(1.2,0.5){{\large $\Gamma$}}
\rput(5.7,0.5){{\large $\Delta$}}


\end{pspicture}
\end{center} \caption{Simple and directed graphs:  $\Aut(\Gamma) = (Z_3^2)^+$  and $\Aut(\Delta) = Z_3^2$. }\label{fig2}
\end{figure}

Now, suppose that $A$ has $r>1$ orbits, and the result holds for all groups with the number of orbits less than $r$. 

Consider an arbitrary orbit $X_i$ and the decomposition of $A$ with regard to this orbit, that is, let $A = A_i[A_i^*] \oplus_\phi  B[B']$, where $B$ is the restriction of $A$ to $X\setminus X_i$. 
Let $\clo{B}$ be the $2$-orbit closure of $B$. By Lemma~\ref{lem:clo},  $\clo{B}$ is abelian, and it has $r-1$ orbits. 

It follows, by the induction hypothesis, that $\clo{B}\in DGR(4)$, and there exists a $4$-coloured digraph $\Gamma_2$ on the set of vertices $X\setminus X_i$ representing $\clo{B}$.  We construct a digraph $\Gamma$ on $X$ representing $A$. Let $\Gamma_1$ be a $4$-coloured digraph on $X_i$ representing $A_i$ (which exists by the proof for the case $r=1$). We may assume that both the graphs $\Gamma_1$ and $\Gamma_2$ are connected (as undirected graphs) in colours $2$ and $3$ (this may be achieved by suitable change of colours). For each orbit $X_j$ of $\clo{B}$, we put the edges coloured $1$ from $X_i$ to $X_j$ joining corresponding cosets in parallel manner as in the proof of Lemma~\ref{lem:2orb}. We assume that these edges are directed from $X_i$ to $X_j$.
The remaining edges joining the vertices of $X_i$ and $X\setminus X_i$ are coloured $0$. 

Then, obviously, $\Aut(\Gamma) \supseteq A$, Moreover, since $\Gamma_1$ and $\Gamma_2$ are connected in colours $2$ and $3$, and edges in colour $1$ between $\Gamma_1$ and $\Gamma_2$ are directed from $\Gamma_1$ to $\Gamma_2$,  each automorphism of $\Gamma$ preserves the orbit $X_i$. Thus,  
$\Aut(\Gamma) \subseteq A_i \oplus \clo{B}$. 

The construction ensures that for any two orbits $X_j$ and $X_k$ of $A$, the  the restriction of $\Aut(\Gamma)$ to $X_j\cup X_k$ is the same as the restriction of $A$ to $X_j\cup X_k$. Hence, by Lemma~\ref{lem:2equal}, $\Aut(\Gamma) = A$, as required. 
This completes the proof.
\end{proof}

Again, what we have proved in addition is that  ``four colours suffices'', and that the bound $4$ below is sharp.

\begin{Corollary}
An abelian group $A$ is the automorphism group of a coloured directed graph  if and only if it is the automorphism group of a complete directed graph $($without loops$)$ whose edges are coloured in at most $4$ colours.
\end{Corollary}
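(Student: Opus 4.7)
The plan is to prove the two directions separately, using the remark preceding Lemma~\ref{lem:2equal} for the easy direction and mirroring the inductive construction of Theorem~\ref{th:main} for the hard direction, with substantial simplifications afforded by the availability of directed edges.

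For the ``only if'' direction, any permutation $\sigma$ that is $2$-orbit-compatible with $\Aut(\Gamma)$ preserves all edge colours of $\Gamma$ (since an edge either lies within a single orbit or between two orbits), and hence belongs to $\Aut(\Gamma)$. So any such automorphism group is automatically $2$-orbit-closed, and no work is needed here.

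For the ``if'' direction I would induct on the number $r$ of orbits of $A$. For $r=1$, $A$ is transitive abelian and hence regular; Babai's theorem from \cite{bab1} places every regular abelian group in $DGR(2)$ with the exceptions $Z_2^n$ ($n=2,3,4$) and $Z_3^2$, and these are handled by Lemma~\ref{lem:z234} (for $Z_2^n$, using that $x = x^{-1}$ identifies coloured graphs with digraphs) and by a minor directed modification of Lemma~\ref{lem:Aalpha} for $Z_3^2$ (see the right side of Figure~\ref{fig2}, which in fact yields $Z_3^2\in DGR(3)$). For $r>1$, I would pick any orbit $X_i$ and form the subdirect decomposition $A = A_i[A_i^*] \oplus_\phi B[B']$ where $B = A|_{X\setminus X_i}$. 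By Lemma~\ref{lem:clo} the $2$-orbit closure $\clo{B}$ is abelian and (tautologically) $2$-orbit-closed with $r-1$ orbits, so by the inductive hypothesis $\clo{B}\in DGR(4)$, represented by some $\Gamma_2$ on $X\setminus X_i$. Letting $\Gamma_1$ be a representation of $A_i$ on $X_i$ from the base case, I would assemble $\Gamma$ from $\Gamma_1$ and $\Gamma_2$ by inserting colour-$1$ arcs \emph{directed from} $X_i$ to each orbit $X_j$ of $\clo{B}$, joining cosets in the parallel manner of Lemma~\ref{lem:2orb}, with all other cross-edges coloured $0$; a harmless preliminary recolouring ensures $\Gamma_1$ and $\Gamma_2$ are each connected (as underlying undirected graphs) in colours $2$ and $3$.

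The verification then reduces to three points: $\Aut(\Gamma)\supseteq A$ by construction; $X_i$ is a fixed block of $\Aut(\Gamma)$ because it is the unique source of colour-$1$ arcs while the other orbits are linked together through the colour-$2$/$3$ connectivity of $\Gamma_2$, forcing $\Aut(\Gamma)\subseteq A_i \oplus \clo{B}$; and the restriction of $\Aut(\Gamma)$ to $X_j\cup X_k$ equals $A|_{X_j\cup X_k}$ for every pair of orbits, whence $\Aut(\Gamma)=A$ by Lemma~\ref{lem:2equal}. The critical simplification over Theorem~\ref{th:main} occurs in the middle step: the asymmetry of the directed colour-$1$ arcs simultaneously excludes any swap between $X_i$ and other orbits and any nontrivial involution on $X_i$, so one avoids entirely the intricate colour-degree bookkeeping and the special case analysis around $Z_3^2[H]\oplus_\phi Z_3^2[H]$ that dominated the undirected proof. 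I expect the only delicate point to be the $r=1$ base case, where the exceptional regular groups from Babai's list must be reconciled with the explicit representations supplied by the preliminary lemmas.
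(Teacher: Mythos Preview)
Your proposal is correct and matches the paper's own proof of Theorem~\ref{th:main2} (from which the Corollary is read off directly) essentially step for step, including the base case via Babai's theorem and the handling of the exceptions $Z_2^n$ and $Z_3^2$. One small clarification on your closing commentary: the involution issue disappears not because of the directed colour-$1$ cross-arcs but because in the digraph setting $\Gamma_1$ already satisfies $\Aut(\Gamma_1)=A_i$ rather than $(A_i)^+$; the directed cross-arcs are used only to pin $X_i$ as a fixed block.
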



\end{document}